\documentclass[12 pt, reqno]{amsart}
\usepackage{fullpage}

\usepackage{amsmath}
\usepackage{amsfonts}
\usepackage{amssymb}
\usepackage{amsthm}
\usepackage{comment}
\usepackage{epsfig}
\usepackage{psfrag}
\usepackage{mathrsfs}
\usepackage{amscd}
\usepackage[all]{xy}
\usepackage{rotating}
\usepackage{lscape}
\usepackage{amsbsy}
\usepackage{verbatim}
\usepackage{moreverb}
\usepackage{color}
\usepackage{bbm}
\usepackage{eucal}
\usepackage{tikz}
\usepackage{tikz-cd}
\usetikzlibrary{decorations.markings,arrows}
\usetikzlibrary{cd}
\usepackage{hyperref}
\usepackage{xspace}
\usepackage{algorithm}
\usepackage[noend]{algorithmic}

\usepackage{chngcntr}
\counterwithin{figure}{section}
\newtheorem{thm}{Theorem}[subsection] 

\newtheorem{lemma}[thm]{Lemma}

\newtheorem{definition}[thm]{Definition}
\newtheorem{prop}[thm]{Proposition} 
\newtheorem{remark}[thm]{Remark}
\theoremstyle{plain}

\newtheorem*{thm*}{Theorem}

\newtheorem{example}[thm]{Example}

\newcommand{\lemref}[1]{Lemma~\ref{lem:#1}}

\newcommand{\ssref}[1]{subsection~\ref{ssec:#1}}
\newcommand{\figref}[1]{Figure~\ref{fig:#1}}
\newcommand{\exref}[1]{Example~\ref{ex:#1}}
\newcommand{\propref}[1]{Proposition~\ref{prop:#1}}
\newcommand{\defref}[1]{Definition~\ref{def:#1}}

\newcommand{\secref}[1]{Section~\ref{sec:#1}}

\theoremstyle{remark}

\definecolor{orange}{rgb}{.95,0.5,0}
\definecolor{light-gray}{gray}{0.75}
\definecolor{brown}{cmyk}{0, 0.8, 1, 0.6}
\definecolor{plum}{rgb}{.5,0,1}

\DeclareMathOperator{\Link}{\sf Link}

\usepackage{scalerel}
\usepackage{enumitem}

\DeclareMathOperator{\wwedge}{{\hstretch{.8}{\wedge\mkern-8mu\wedge}}}

\DeclareMathOperator{\colim}{{\sf colim}}

\DeclareMathOperator{\Top}{\mathsf{Top}}
\DeclareMathOperator{\Pos}{\mathsf{Poset}}

\DeclareMathOperator{\sCpx}{\mathsf{sCpx}}

\DeclareMathOperator{\Nat}{\sf Face}

\newcommand{\ehp}{the EHP hypotheses\xspace}

\def\cA{\mathcal A}\def\cC{\mathcal C}

\def\cP{\mathcal P}
\def\cQ{\mathcal Q}\def\cR{\mathcal R}

\def\NN{\mathbb N}
\def\RR{\mathbb R}
\def\WW{\mathbb W}
\def\ZZ{\mathbb Z}

\usepackage{graphicx}
\graphicspath{{./}{../figs/}}

\begin{document}

\title{Regularity via Links and Stein Factorization}
\author{Ryan Grady
\and Anna Schenfisch}
 \thanks{The authors declare they have no conflict of interest.}

\address{Department of Mathematical Sciences\\Montana State University\\Bozeman, MT 59717}
\email{ryan.grady1@montana.edu}

\address{Department of Mathematical Sciences\\Montana State University\\Bozeman, MT 59717}
\email{annaschenfisch@montana.edu}


\begin{abstract}
Here, we introduce a new definition of regular point for piecewise-linear (PL) functions on combinatorial (PL triangulated) manifolds.  This definition is given in terms of the restriction of the function to the link of the point.  We show that our definition of regularity is distinct from other definitions that exist in the combinatorial topology literature.  Next, we stratify the Jacobi set/critical locus of such a map as a poset stratified space.  As an application, we consider the Reeb space of a PL function, stratify the Reeb space as well as the target of the function, and show that the Stein factorization is a map of stratified spaces.
\end{abstract}

\keywords{Reeb space, combinatorial manifold, stratified spaces}
\subjclass[2020]{Primary 57Q99. Secondary 55U05, 57R70, 57N80.}

\maketitle

\setcounter{tocdepth}{2}
\tableofcontents
\thispagestyle{empty}

\section{Introduction}

In this work, we present a novel regularity condition for piecewise-linear (PL) functions on combinatorial (PL triangulated) manifolds. The idea is to recover the topological implications of the (Canonical) Submersion Theorem at a regular point, while not insisting on smoothness (or even differentiability).  

To wit, if $f \colon X^n \to \RR^k$ is a (generic) PL function from an $n$-dimensional combinatorial manifold (with $n \ge k$), we use the triangulation of $X$ to analyze the behavior of $f$ when restricted to the link of a point $p \in X$.  Inspired by the smooth setting, a point will be regular if $f$ respects a join presentation of the link into two spheres (of specified dimensions).  These spheres correspond to the unit spheres in the tangent and normal bundles to $X$ at $p$.  We use this {\em link regularity} to define the Jacobi set/critical locus: $J_f \subset X$, see Definition \ref{def:jacobi}.

In Section \ref{sec:reg}, we prove some elementary properties of the Jacobi set.  In particular, we equip $J_f$ with a stratification compatible with the ambient stratification of $X$. Moreover, we compare our regularity condition with two other notions in the literature: an approach via the PL differential and a homological regularity criterion.  We prove---and provide examples to illustrate---that these three definitions of regular point are distinct.

Throughout the paper, we work with poset stratified spaces.  To our knowledge, these spaces were first described in work of Lurie \cite{lurie}. Section \ref{sec:stratspaces} introduces the definition of poset stratified space and explicitly notes some elementary properties that are difficult to find in the literature. In \secref{otherstrat}, we (briefly) compare poset stratified spaces with the more classical notion of stratified space, e.g., as presented in \cite{Friedman}.

This project was motivated by the work of Edelsbrunner, Harer, and Patel \cite{edelsbrunner2008reeb} on Reeb spaces. Indeed, the initial goal was to express their ``canonical stratification" of the Reeb space in terms of poset stratified spaces.  To this end, we prove the following in Section \ref{sec:reeb}.

\begin{thm*}
Let $X$ be a combinatorial $n$-manifold and $f \colon X\to \RR^k$ a generic PL function, with $n \ge k$.  Let $\WW_f$ denote the corresponding Reeb space and $J_f$ the Jacobi set of $f$ (with regards to link regularity). Then,
\begin{itemize}
\item[{\rm (a)}] The stratification of $J_f$ induces a stratifications of $\WW_f$ and $\RR^k$ such that the Stein factorization $g \colon \WW_f \to \RR^k$ is a stratified map.
\item[{\rm (b)}] Further, if $f$ is injective on the $(k-1)$-simplicial skeleton of $X$, then we have a commutative diagram in stratified spaces
\begin{center}
\begin{tikzcd}[column sep=small]
X \arrow[rr, "f"] \arrow[dr, "q"'] & & Y \\
& \WW_f \arrow[ur, "g"'] & 
\end{tikzcd}
\end{center}
\end{itemize}
\end{thm*}

As we discuss in the final section of the paper, morally, the stratification of $\WW_f$ is a refinement of that in \cite{edelsbrunner2008reeb}.  However, as we discuss, there are some interesting subtleties.  Moreover, the stratification of $\RR^k$ and the verification that the Stein factorization is a stratified map is not contained in \emph{ibid}.

\subsection*{Acknowledgements} AS is supported by the National Science Foundation under NIH/NSF DMS 1664858. RG is supported by the Simons Foundation under Travel Support/Collaboration 9966728. The authors thank David Ayala for discussion and shared insight.  Many thanks also go to the anonymous referee for their thorough and incredibly helpful suggestions.

\section{Stratified Spaces}\label{sec:stratspaces}

Stratified spaces have been used throughout topology for over 60 years with major foundational contributions due to Thom, Mather, Goresky and MacPherson. The paradigm we use is developed extensively in the work of Ayala, Francis, and Tanaka \cite{aft} and first appeared in Appendix A of Lurie \cite{lurie}.

\subsection{Poset Stratified Spaces}

\subsubsection{Poset Topologies and Operations}

\begin{definition}
    A {\em poset} is a pair $(\cP, \le)$ consisting of
    \begin{itemize}
        \item a set $\cP$, and
        \item a relation $\le$ on $\cP$
    \end{itemize}
    such that $\le$ defines a partial order on $\cP$. I.e., for all $p,q,r \in \cP$, the following hold
    \begin{enumerate}
        \item (reflexivity) $p \le p$.
        \item (transitivity) if $p \le q$ and $q \le r$, then $p \le r$.
        \item (anti-symmetry) if $p \le q$ and $q \le p$, then $p=q$.
    \end{enumerate}
    A morphism of posets is an order-preserving map. The resulting category of posets is denoted $\Pos$.
\end{definition}

\begin{definition}
Let $(\cP, \le)$ be a poset. We equip $\cP$ with the {\em upward closed topology} as follows: $U \subseteq \cP$ is open if and only if for all $u \in U$, $\cP_{u \le} := \{ p \in \cP ~|~ u \le p\} \subseteq U.$
\end{definition}

\begin{prop}\label{prop:uc}
The upward closed topology defines a functor, $\mathrm{UC} : \Pos \to \Top$.
\end{prop}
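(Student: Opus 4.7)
The plan is to verify the three standard conditions that make the assignment $\mathrm{UC}$ into a functor: (i) that the upward closed subsets form a topology on every poset $\cP$, (ii) that every poset morphism (order-preserving map) $f:\cP \to \cQ$ induces a continuous map between the resulting spaces, and (iii) that identities and compositions of such morphisms are preserved. Each verification is essentially unpacking the definitions, so I do not expect any real obstacle; the content is just bookkeeping that the Alexandrov-style topology built from $\cP_{u \le}$ is natural in $\cP$.

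First I would check that the collection of upward closed subsets is a topology. The empty set and $\cP$ itself are vacuously upward closed. If $\{U_i\}$ is any family of upward closed sets and $u \in \bigcup_i U_i$, then $u \in U_j$ for some $j$, and since $U_j$ is upward closed, $\cP_{u \le} \subseteq U_j \subseteq \bigcup_i U_i$. For finite intersections, if $u \in U \cap V$ with both upward closed, then $\cP_{u \le}$ is contained in each, hence in $U \cap V$.

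Next, given an order-preserving $f:\cP \to \cQ$, I would show that $f$ is continuous by checking that $f^{-1}(V)$ is upward closed for every upward closed $V \subseteq \cQ$. If $u \in f^{-1}(V)$ and $u \le u'$, then $f(u) \le f(u')$ since $f$ preserves order, and because $V$ is upward closed, $f(u') \in V$; hence $u' \in f^{-1}(V)$, as required.

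Finally, functoriality on morphisms is immediate: $\mathrm{UC}$ does not alter the underlying set-theoretic maps, so $\mathrm{UC}(\mathrm{id}_\cP) = \mathrm{id}_{\mathrm{UC}(\cP)}$ and $\mathrm{UC}(g \circ f) = \mathrm{UC}(g) \circ \mathrm{UC}(f)$ hold on the nose. The only genuine content, and thus the ``main'' step (though still routine), is the continuity check in the previous paragraph, where the order-preservation hypothesis on morphisms of $\Pos$ is used in an essential way.
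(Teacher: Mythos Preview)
Your proposal is correct and complete. The paper itself states this proposition without proof, treating it as a standard fact; your verification via the three functor axioms (topology, continuity of order-preserving maps, preservation of identities and composition) is exactly the routine argument one would supply if asked.
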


\begin{remark}
Note that for $\cP \in \Pos$, the topology $\mathrm{UC} (\cP)$ is rarely Hausdorff.  Indeed, if two points, $x,y \in \cP$,  have a common upper bound, $x \le z \ge y$, they cannot be separated by disjoint opens.
\end{remark}

\begin{definition}
    Let $(\cP,\le)$ and $(\cQ,\le)$ be posets. We can explicitly describe the \emph{product poset}
    $(\cP \times \cQ,\le)$ by declaring
    $(p,q) \le (p',q')$ if $p\le p'$ and $q \le q'$.
\end{definition}

This construction is categorical.

\begin{prop}
The category $\mathsf{Poset}$ has (finite) products.
\end{prop}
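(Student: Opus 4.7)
The plan is to verify that the explicit construction in the preceding definition satisfies the universal property of a categorical product in $\mathsf{Poset}$. First I would confirm that $(\cP \times \cQ, \le)$ is in fact a poset: reflexivity, transitivity, and anti-symmetry on the product relation follow coordinatewise from the same properties in $\cP$ and in $\cQ$, so this step is purely bookkeeping.

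Next I would introduce the two coordinate projections $\pi_\cP : \cP \times \cQ \to \cP$ and $\pi_\cQ : \cP \times \cQ \to \cQ$ and observe that they are monotone, i.e.\ morphisms in $\mathsf{Poset}$, directly from the definition of $\le$ on the product. Then, given any poset $\cR$ together with monotone maps $f : \cR \to \cP$ and $g : \cR \to \cQ$, I would define the pairing $\langle f, g \rangle : \cR \to \cP \times \cQ$ by $r \mapsto (f(r), g(r))$. Monotonicity of this pairing is immediate: if $r \le r'$ in $\cR$ then $f(r) \le f(r')$ and $g(r) \le g(r')$, so $\langle f,g\rangle(r) \le \langle f,g\rangle(r')$ in the product order. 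Commutativity of the two triangles $\pi_\cP \circ \langle f,g\rangle = f$ and $\pi_\cQ \circ \langle f,g\rangle = g$ is tautological, and uniqueness holds because any monotone lift must agree with $\langle f,g\rangle$ componentwise on underlying sets.

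For finite (rather than binary) products, I would proceed by induction on the number of factors, with the base case being the empty product: the one-point poset $\ast$ is terminal in $\mathsf{Poset}$, since from any poset there is a unique (vacuously monotone) map to $\ast$. The inductive step combines this with the binary case established above, yielding products of arbitrary finite arity.

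Honestly, there is no real obstacle here; every step is a direct unwinding of definitions, and the only subtlety worth flagging is the purely set-theoretic observation that a product of partial orders is again a partial order. I would therefore keep the written proof very short, perhaps just a single sentence exhibiting $\langle f, g\rangle$ and noting that all required properties are inherited coordinatewise from $\cP$ and $\cQ$.
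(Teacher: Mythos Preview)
Your proof is correct. The paper does not actually supply a proof of this proposition; it simply states it after the explicit definition of the product poset, remarking that ``this construction is categorical,'' so your verification of the universal property is precisely the standard argument one would give to fill in the omitted details.
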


\begin{example}
Let $(\RR, \le)$ be the standard total order on $\RR$. Then $\RR^2 \cong \RR \times \RR$ admits three partial orders
\begin{enumerate}
\item {\em Lexicographic order}: $(a,b) \le (c,d)$ if and only if $a<c$, or $a=c$ and $b \le d$;
\item {\em Product order}: $(a,b) \le (c,d)$ if and only if $a\le c$ and $b\le d$;
\item {\em Closure of direct product}: $(a,b) \le (c,d)$ if and only if $a <c$ and $b<d$, or $a=c$ and $b=d$.
\end{enumerate}
Only the first partial order is a total order. As the name indicates, the product order is the categorical product.
\end{example}

\begin{definition}
Let $(\cP, \le)$ be a poset. The {\em left cone} on $\cP$ is a new poset, $\cP^\triangleleft$, that is defined by adjoining a new minimum element to $\cP$. That is, let $[n]$ denote the totally ordered set $\{0 \le 1 \le \dotsb \le n\}$, then $\cP^\triangleleft$ is defined as the pushout

\[
\begin{tikzcd}
    \{0\} \times \cP \arrow[hookrightarrow]{r} \arrow[d,twoheadrightarrow,"\text{pr}_1"] & {[1]} \times \cP \arrow[d] \\
    \{0\} \arrow[r] & \cP^\triangleleft \arrow[ul, phantom, "\lrcorner", very near start].
    \end{tikzcd}
\]

Similarly, we can define the {\em right cone} on $\cP$ as the pushout
\[
\begin{tikzcd}
    \{1\} \times \cP \arrow[hookrightarrow]{r} \arrow[d,twoheadrightarrow,"\text{pr}_1"] & {[1]} \times \cP \arrow[d] \\
    \{1\} \arrow[r] & \cP^\triangleright \arrow[ul, phantom, "\lrcorner", very near start],
    \end{tikzcd}
\]
which adjoins a new maximum to $\cP$.
\end{definition}

\begin{example}
Note that $\emptyset^\triangleleft \cong \{0\}$,$\{0\}^\triangleleft \cong [1]$, and more generally, for $n \in \NN$,
$[n]^\triangleleft \cong [n+1] \cong [n]^\triangleright$.
\end{example}

\subsubsection{The category of stratified spaces}

\begin{definition}
A {\em stratified topological space} is a triple $(X \xrightarrow{\phi} \cP)$ consisting of
\begin{itemize}
    \item a paracompact, Hausdorff topological space, $X$,
    \item a poset $\cP$, equipped with the upward closed topology, and
    \item a continuous map $X \xrightarrow{\phi} \cP$.
\end{itemize}
\end{definition}

\begin{definition} Given a stratified topological space $\phi : X \to \cP$, and any $p \in \cP$, the {\em $p$-stratum}, 
$X_p$, is defined as
$$X_p := \phi^{-1}(p).$$
\end{definition}

\begin{example}
Any paracompact, Hausdorff space defines a stratified topological space via $\phi : X \to [0]$.
We call this the
\emph{trivial stratification}.
\end{example}

\begin{example}
$\RR$ is stratified over $\{- > 0 < +\}$, where $X_- = \{x \in \RR ~|~ x < 0\}$, $X_0 = \{0\}$, and
$X_+ = \{x\in \RR ~|~ x>0\}$ are mapped to $-$, $0$, and $+$, respectively.
We often find it convenient to indicate a stratified space pictorially, by drawing the strata. For example, below
is a depiction of the stratified space $(\RR \to \{- > 0 < +\})$:
    $$\begin{tikzpicture}[auto]
        \node (a) at (0:0) {$\bullet$};
        \node (b) at (0:1.5) {};
        \node (c) at (180:1.5) {};
        \node (d) at (270:.5) {$X_0$};
        \draw (a) to node {$X_+$} (b)
            (a) to  node[swap] {$X_-$} (c);
    \end{tikzpicture}$$
\end{example}

\begin{definition} 
A {\em map of stratified topological spaces} $(\phi : X \to \cP)$ to $(\psi : Y \to \cQ)$ is a pair of 
continuous maps $(f_1, f_2)$ making the following diagram commute.
$$\begin{tikzcd}
    X \arrow[r,"f_1"] \arrow[d,"\phi"] & Y \arrow[d,"\psi"] \\
    \cP \arrow[r,"f_2"] & \cQ.
    \end{tikzcd}$$
A stratified map is an {\em open embedding} if both $f_1$ and $f_2$ are open (topological) embeddings.
\end{definition}

We will let $\mathsf{TStrat}$ denote the category of stratified topological spaces and stratified maps. The category has many nice properties, e.g., it has finite products.

\begin{definition}
A {\em refinement} of a stratified space $(\phi : X \to \cP)$ is a stratified space $(\phi' : X \to \cR)$ and a map of stratified spaces $(f_1, f_2) : (X \to \cR) \to (X \to \cP)$ such that $f_1 : X \to X$ is the identity, and~$f_2 : \cR \to \cP$ is a continuous surjection, diagrammatically we have the following.
$$\begin{tikzcd}
    X \arrow[equal,r] \arrow[d,"\phi'"] & X \arrow[d,"\phi"] \\
    \cR \arrow[->>,r,"f_2"] & \cP.
    \end{tikzcd}$$
\end{definition}

\subsection{Filtered spaces and poset stratified spaces}\label{sec:otherstrat}

The definition of stratified space we are using is not the only such notion. We will now briefly recall the more classical definition of stratified space, which grew out of Thom's work on singularity theory, and note how the two notions compare.  We will follow the conventions of Greg Friedman's recent text \cite{Friedman}.

More classically, one starts with a Hausdorff space, $X$, filtered by closed subspaces
\[
\emptyset = X^{-1} \subseteq X^0 \subseteq X^1 \subseteq \dotsb \subseteq X^n = X.
\]
The {\em strata} of $X$ are the connected components of $X_i:= X^i \setminus X^{i-1}$ for $i \ge 0$. A canonical example is a finite simplicial complex filtered by its simplicial skeleton, i.e., $X^k$ consists of all simplices of dimension less than or equal to $k$. In this case, the $i$-strata consist of the interiors of all $i$-simplices in the complex.

In the setting of \cite{Friedman}, a filtered space is {\em stratified} if it satisfies the following {\em Frontier Condition}.

\begin{definition}
A filtered space $X$ satisfies the {\em Frontier Condition} if for any two strata $S$ and $T$, if $S \cap \overline{T} \neq \emptyset$, then $S \subset \overline{T}$.
\end{definition}

Let $\phi \colon X \to \cP$ be a stratified space in the sense of Lurie (and everywhere in this paper outside of this subsection). Further, assume that $\cP$ is a graded poset, i.e., is equipped with a map of posets (a rank function) $\rho \colon \cP \to \NN_0$. We can filter $X$ by sublevel sets of the composition $\rho \circ \phi \colon X \to \NN_0$.  As the following example shows, the resulting filtered space need not satisfy the Frontier Condition (potentially depending on the choice of rank function).  For more discussion and comparison between approaches to stratified spaces see \cite{aft, Nocera, Trotman}.

\begin{example}
Consider the {\em upright} height function on the torus $h: T^2 \to \RR$. 
There are four critical points and we stratify $T^2$ via the stable manifolds with respect to gradient flow of the function $-h$ (meaning we flow down). That is, for a critical point $c \in T^2$, the corresponding stratum, $X_c$, is given by
\[
X_c = \left \{ p \in T^2 \mid \lim_{t \to \infty} \Phi (t,p) = c \right \},
\]
where $\Phi$ is the gradient flow of $-h$.

More generally, returning the cart to behind the horse, let $M$ be a compact smooth manifold, and let $f: M \to \RR$ be a Morse function. The {\em stable$^{-}$ stratification} of $M$ is determined by the stratifying map $s^{-} : M \to \cC^{-}$, where $\cC^{-}$ is the poset of critical points $\{c_\alpha \}$ with generating relations: $c_\alpha$ and $c_\beta$ are related if there is a flow line from $c_\alpha$ to $c_\beta$.
Rephrasing  standard results in Morse theory, see \cite{liviu}, it follows that
 the triple $(M \xrightarrow{s^-} \cC^-)$ defines a stratified space in the sense of Lurie, i.e., that the stable$^-$ stratification is well defined. Moreover, this stable$^-$ stratification is Euclidean, meaning each stratum is diffeomorphic to $\RR^n$ for some $n$.

For our specific example of the upright torus, the poset of critical values is linear, and given by $\cC^{-}=\{t \le s_1 \le s_2 \le b\}$, where $t$, $s_1$, $s_2$, and $b$ are as in \figref{upright}. 
There is an obvious rank function, namely $\rho_1 \colon \cC^{-} \to \{0 \le 1 \le 2 \le 3\}$, where $\rho_1 (t)=0,  \rho_1 (s_1) = 1, \rho_1 (s_2) =2, \rho_1 (b)=3$.  Although this is a stratified space in the sense of Lurie, it is not a stratified space in the sense of Friedman, as the resulting filtered space does not satisfy the Frontier Condition. In particular, $X_1 \cap \overline{X_2} = \{s_1\}$ while $X_1 \not \subset \overline{X_2}$.

There is another natural rank function $\rho_2 \colon \cC^{-} \to \{0 \le 1 \le 2\}$, which sends a critical point to 2 minus its index.  With respect to this choice, the resulting filtered space \emph{does} satisfy the Frontier Condition.

\end{example}

\begin{figure}[h!] 
\centering
\includegraphics[scale=.65]{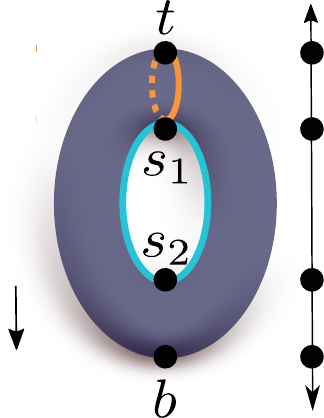}
\begin{caption}{The four critical values of $-h$ are shown on the upright $T^2$ along with their corresponding images in $\RR$. The direction of $-h$ is indicated by the leftmost arrow. Two strata defined by stable manifolds, $X_{s_1}$  and $X_{s_2}$, are shown in orange (upper circle) and teal (inner circle), respectively.}
\end{caption}
\label{fig:upright}
\end{figure}

\subsection{Face stratifications}

Simplicial complexes are canonically stratified in the sense of \cite{Friedman}.  We now show that they are very naturally poset-stratified as well.
The following definition gives a canonical stratification of a simplicial complex, the stratification induced by its \emph{face poset} (see, e.g.,~\cite{bjorner1984posets, stanley1991f, hersh2021shellability}).

\begin{definition}
Let $K$ be a simplicial complex. The {\em face stratification of $K$} is defined by $K \xrightarrow{\varphi_K} \Nat(K)$, where $\Nat(K)$ is the poset such that, for simplices $\tau, \sigma \subset K$, we have the relation $\tau \leq \sigma$ if and only if $\tau$ is a face of $\sigma$. Suppose that $\tau$ is the lowest dimensional simplex containing $x \in K$. Then $\varphi_K(x) = \tau$.
\end{definition}


We note a couple key properties of the face stratification.

\begin{prop}\label{prop:coarsest}
Let $K$ be a simplicial complex. Then $K \xrightarrow{\varphi_K} \Nat(K)$ is a refinement of the coarsest stratification of $K$ such that 
each stratum is a connected manifold.
\end{prop}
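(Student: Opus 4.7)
The plan has three parts: verify that $\varphi_K$ has connected manifold strata; construct a candidate coarsest stratification $\psi_c : K \to \cQ_c$ among such stratifications; and show that $\varphi_K$ refines $\psi_c$. The first part is immediate: each stratum $\varphi_K^{-1}(\tau) = \mathring\tau$ is the open geometric realization of the simplex $\tau$, homeomorphic to an open ball in $\RR^{\dim\tau}$, hence a connected topological manifold.

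For the second part, I will construct $\psi_c$ by grouping points of equal local topology. For $x \in K$, let $T(x)$ denote the homeomorphism type of a sufficiently small open neighborhood of $x$ in $K$, and declare $x \sim y$ if and only if $T(x) \cong T(y)$ and $x, y$ lie in a common path-component of $\{z \in K : T(z) \cong T(x)\}$. The strata of $\psi_c$ are the $\sim$-equivalence classes, and $\cQ_c$ is the set of these classes ordered by frontier containment (so $[x] \leq [y]$ iff $[x] \subseteq \overline{[y]}$) and topologized via $\mathrm{UC}$. Each stratum is path-connected by construction, and constancy of the local model implies that each is a topological manifold.

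Next I would observe that all points of a single open simplex $\mathring\tau$ share a common local type, since a neighborhood of any $x \in \mathring\tau$ in $K$ is homeomorphic to $\RR^{\dim\tau} \times \mathrm{cone}(\mathrm{link}_K(\tau))$, independently of $x$; moreover $\mathring\tau$ is path-connected, so $\mathring\tau$ sits inside a single $\sim$-class. This yields a continuous surjection $\Nat(K) \twoheadrightarrow \cQ_c$ exhibiting $\varphi_K$ as a refinement of $\psi_c$.

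The technical obstacle will be confirming that $\psi_c$ is truly coarsest. Given any stratification $\psi : K \to \cQ$ with connected manifold strata, I must show that each stratum $\psi^{-1}(q)$ lies in a single $\sim$-class. Path-connectedness of $\psi^{-1}(q)$ is immediate from the connected-plus-manifold hypothesis, so the heart of the matter is to show that the local homeomorphism type in $K$ is constant along every connected manifold stratum of $\psi$. This should follow from the local-conical structure present at manifold strata of any well-behaved stratified space, and making this invariance precise is the main difficulty of the proof.
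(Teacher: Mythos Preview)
The paper states this proposition without proof, so there is no argument against which to compare. Your outline is a sensible approach and is essentially the construction of the \emph{intrinsic stratification} of a polyhedron: stratify by local homeomorphism type and pass to connected components. The easy direction---that each open simplex $\mathring\tau$ lies in a single $\sim$-class, so that $\varphi_K$ refines $\psi_c$---is exactly as you describe.

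Two places need more care. First, the sentence ``constancy of the local model implies that each is a topological manifold'' is not automatic: knowing that every point of $S\subset K$ has the same \emph{ambient} neighborhood type in $K$ does not by itself force $S$, in its subspace topology, to be locally Euclidean. In the simplicial setting this does hold, because each of your $\psi_c$-strata is a union of open simplices of a common dimension and one can check the gluings, but that argument needs to be written down.

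Second, the step you correctly flag as the crux---that the ambient local type in $K$ is constant along any connected manifold stratum of an \emph{arbitrary} stratification $\psi$---is where all the content lies, and your appeal to ``the local-conical structure present at manifold strata of any well-behaved stratified space'' does not go through as stated. In the paper's framework a stratification is merely a continuous map to a poset: no conical, Whitney, or regularity hypotheses are imposed, so there is no such structure to invoke. One has to argue directly, using invariance of domain and the fact that links are locally constant along open simplices, that a connected $d$-manifold sitting as a locally closed subset of $K$ cannot cross between distinct local-type regions. Absent that argument, the proposal is an outline rather than a proof.
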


As discussed in the preceding subsection, when the stratifying poset is graded, we obtain a filtered space.

\begin{prop}\label{prop:filtration}
Let $K$ be a simplicial complex. There is a map of posets $\dim \colon \Nat(K) \to \NN_0$, associating to a simplex its dimension, such that the composite $\dim \circ \varphi_K \colon K \to \NN_0$ is the simplicial skeletal filtration.
\end{prop}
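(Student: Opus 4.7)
The plan is to define $\dim$ on objects by sending each simplex $\sigma \in \Nat(K)$ to its simplicial dimension $\dim(\sigma) \in \NN_0$, and then check three things: (i) that this assignment respects the partial orders on both sides, (ii) that it is continuous with respect to the upward-closed topology, and (iii) that the composite with $\varphi_K$ recovers the skeletal filtration on the nose.

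For (i), recall that $\tau \leq \sigma$ in $\Nat(K)$ means $\tau$ is a face of $\sigma$; this forces $\dim(\tau) \leq \dim(\sigma)$, which is immediate from the definition of a face. For (ii), a subbasis of open sets for the upward-closed topology on $\NN_0$ is given by the up-sets $U_n := \{m \in \NN_0 \mid m \geq n\}$, and I need to check that $\dim^{-1}(U_n) = \{\sigma \in \Nat(K) \mid \dim(\sigma) \geq n\}$ is open in $\Nat(K)$. Given such a $\sigma$ and any $\sigma \leq \tau$, we have $\dim(\tau) \geq \dim(\sigma) \geq n$ by (i), so this preimage is upward-closed, hence open. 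Together, (i) and (ii) show that $\dim$ is a morphism in $\Pos$ (or equivalently, after applying $\mathrm{UC}$, a continuous map).

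For (iii), unwind the definitions: for any $x \in K$, the value $\varphi_K(x)$ is the unique lowest-dimensional simplex $\tau \subset K$ containing $x$ in its relative interior, and $\dim(\varphi_K(x))$ is the dimension of that simplex. This is precisely the characterization of the skeletal filtration value at $x$, namely the smallest $n$ with $x \in K^{(n)}$, since $x$ lies in $K^{(n)} \setminus K^{(n-1)}$ exactly when the carrier simplex of $x$ has dimension $n$.

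There is no real obstacle here; the one thing worth being careful about is not conflating the native partial order with the skeletal filtration directly, since $\Nat(K)$ is finer than the dimension stratification (distinct simplices of the same dimension are incomparable in $\Nat(K)$ but identified in $\NN_0$), and the proposition is really asserting that $\dim$ realizes this coarsening as a poset map. Continuity with respect to $\mathrm{UC}$ as used in step (ii) is the one place where one has to remember that the ``open sets'' in a poset are the upward-closed subsets rather than the downward-closed ones.
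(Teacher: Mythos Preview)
Your proof is correct and is exactly the natural verification one would write. The paper itself states this proposition without proof, treating it as immediate, so there is no ``paper's approach'' to compare against beyond the obvious one you have supplied. One small redundancy: once you have (i), continuity (ii) is automatic by the paper's Proposition~\ref{prop:uc} (the functor $\mathrm{UC}$ sends poset maps to continuous maps), so you need not check it separately.
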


Let $\sCpx$ denote the category of simplicial complexes and simplicial maps. The following is immediate from Proposition \ref{prop:uc} and the definition of simplicial map.

\begin{prop}
Let $f \colon K \to L$ be a simplicial map, then there is a map of posets $\Nat(f) \colon \Nat(K) \to \Nat (L)$ that extends to a functor $\Nat\colon \sCpx \to \mathsf{TStrat}$.
\end{prop}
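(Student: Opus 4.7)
The plan is to define $\Nat(f)$ on simplices and then verify the three requirements: that it respects the face order, that the assignment $f \mapsto \Nat(f)$ is functorial, and that, paired with the topological realization $|f| \colon |K| \to |L|$, it yields a morphism in $\mathsf{TStrat}$.

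First I would set $\Nat(f)(\sigma)$ to be the image simplex $f(\sigma)$, i.e.~the simplex of $L$ spanned by the (possibly repeated) vertex set $\{f(v) \mid v \in \sigma^{(0)}\}$; this makes sense because a simplicial map sends any vertex set of a simplex of $K$ to the vertex set of a simplex of $L$. The poset axiom is then immediate: if $\tau \leq \sigma$ in $\Nat(K)$, then the vertex set of $\tau$ is contained in that of $\sigma$, so $f(\tau)^{(0)} \subseteq f(\sigma)^{(0)}$ and hence $\Nat(f)(\tau) \leq \Nat(f)(\sigma)$. Functoriality ($\Nat(\mathrm{id}_K) = \mathrm{id}_{\Nat(K)}$ and $\Nat(g \circ f) = \Nat(g) \circ \Nat(f)$) is a direct consequence of the fact that each step is determined by vertex maps, and by Proposition \ref{prop:uc} every poset map is automatically continuous for the upward closed topology.

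To extend $\Nat$ to $\mathsf{TStrat}$, I would define it on objects by $K \mapsto (|K| \xrightarrow{\varphi_K} \Nat(K))$ and on morphisms by $f \mapsto (|f|, \Nat(f))$. The essential check is commutativity of the square
\[
\begin{tikzcd}
    |K| \arrow[r,"|f|"] \arrow[d,"\varphi_K"] & |L| \arrow[d,"\varphi_L"] \\
    \Nat(K) \arrow[r,"\Nat(f)"] & \Nat(L).
\end{tikzcd}
\]
For $x \in |K|$, let $\tau$ be the smallest simplex containing $x$, so $x$ lies in the open simplex $\tau^\circ$ and admits a unique expansion $x = \sum_i \lambda_i v_i$ with $\lambda_i > 0$ on $\tau^{(0)} = \{v_i\}$. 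Then $|f|(x) = \sum_i \lambda_i f(v_i)$, and collecting terms over the distinct images gives a strictly positive barycentric expansion of $|f|(x)$ on the vertex set of $f(\tau)$. Hence $f(\tau)$ is the minimal carrier of $|f|(x)$, so $\varphi_L(|f|(x)) = f(\tau) = \Nat(f)(\varphi_K(x))$.

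The one subtle point, and the main obstacle I expect, is the vertex-collapsing case: if $f$ identifies two vertices of $\tau$, then $\dim f(\tau) < \dim \tau$, and one must verify that $|f|(x)$ still lies in the \emph{open} simplex of $f(\tau)$ rather than on a proper face. The coefficient-collecting argument above handles this cleanly because each coefficient $\sum_{f(v_i) = w} \lambda_i$ is a sum of strictly positive reals and therefore strictly positive. With this lemma in hand, all remaining verifications (continuity of $|f|$, that $(|f|, \Nat(f))$ is a morphism in $\mathsf{TStrat}$, functoriality on morphisms) are routine.
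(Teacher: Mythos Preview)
Your argument is correct and is precisely the detailed unpacking of what the paper has in mind: the paper does not give a proof at all, merely stating that the result is ``immediate from Proposition~\ref{prop:uc} and the definition of simplicial map.'' Your careful verification of the commuting square, including the vertex-collapsing case via positivity of collected barycentric coefficients, supplies exactly the details the paper omits.
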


Note that the face stratification makes sense in other categories: $\Delta$-complexes, CW-complexes, simplicial objects, etc. 


\subsection{Ambient stratifications} 

We now consider how an embedded stratified space determines a stratification of the ambient space. The example of a simplicial complex---equipped with its face stratification---embedded in some Euclidean space is an illustrative example to track throughout.

Suppose that $(S \xrightarrow{ \varphi } \cQ)$ is a stratified space and $S \hookrightarrow X$ is a topological embedding into the paracompact, Hausdorff space $X$, e.g., $X = \RR^k$. (We will conflate $S$ with its image in $X$.) To extend the  stratification of~$S$ to a stratification of $X$ compatible with the usual topology on $X$, we can append a final element to the poset, $\cQ$, and map the complement,  $X \setminus Y$, to this element. This is exactly the result of taking the right cone on $\cQ$. The resulting stratification is the \emph{right conical extension} of $\varphi$, and we denote this map $\varphi_S \colon X \to \cQ^\triangleright$.

Note that we often want the strata to be connected. In this case we introduce a new stratifying poset $\cQ^{\wwedge}$, which is a refinement of $\cQ^\triangleright$.


\begin{definition}\label{def:connectedstrat}
Let  $(S \xrightarrow{ \varphi } \cQ)$ be a stratified space, $S \hookrightarrow X$ a topological embedding, and $\pi_0 (X \setminus S)= \cA$. Define the poset, $\cQ^{\wwedge}$, as the set $\cQ \amalg \cA$, subject to the following generating relations:
\begin{enumerate}
\item The relations of $\cQ$;
\item For $\ell \in \cQ$ and $\alpha \in \cA$, $\ell \le \alpha$ if and only if $\varphi^{-1}_S (\ell) \subseteq \overline{\alpha}$, i.e., the $\ell$-stratum is in the closure of the connected component indexed by $\alpha$.
\end{enumerate}
There is an obvious extension of the map $\varphi_S$, $\psi_S \colon X \to \cQ^{\wwedge}$ and we call this stratification the {\em connected ambient stratification}. 
\end{definition}

\begin{example}
Consider the 1-simplex embedded as $K\colon [0,1] \hookrightarrow \RR$. The poset $\Nat(K)$ consists of three objects: $b_0 \le a \ge b_1$ and $\Nat(K)^{\wwedge}$ is the poset $a_- \ge b_0 \le a \ge b_1 \le a_+$. The stratifying map $\psi_K \colon \RR \to \Nat(K)^{\wwedge}$ is given by
\[
\psi_K (x) = \begin{cases}a_-  & \text{if } x<0\\ b_0 & \text{if } x=0 \\ a & \text{if } x \in (0,1)\\ b_1& \text{if } x=1\\ a_+ & \text{if } x>1\end{cases} 
\]

\end{example}

\begin{lemma}\label{lem:faceext}
Let  $(S \xrightarrow{ \varphi } \cQ)$ be a stratified space, $S \hookrightarrow X$ a topological embedding, The maps $X \xrightarrow{\varphi_S} \cQ^\triangleright$ and $X \xrightarrow{\psi_S} \cQ^{\wwedge}$ define stratified spaces that restrict to the $\cQ$ stratification on $S$. Moreover, by collapsing the maximal elements in the poset, the connected ambient stratification refines the right conical extension of the $\cQ$ stratification:
\[
\xymatrix{ S\ar[d]^{\varphi} \ar@{^{(}->}[r] & X \ar@{=}[r] \ar[d]^{\psi_S} & X \ar[d]^{\varphi_S} \\ \cQ \ar[r]& \cQ^{\wwedge} \ar@{->>}[r]& \cQ^\triangleright}
\]
\end{lemma}

\begin{definition} Define a category $\mathsf{TStrat}^{\text{embd}}$ as follows:
\begin{itemize}
\item Objects: an object is a triple $(S, X, \iota_S \colon S \hookrightarrow X)$, where $(S \xrightarrow{\phi} \cP)$ is a stratified space, $X$ is a paracompact Hausdorff space, and~$\iota_S$ is a (topological) embedding. 
\item Morphisms: a morphism $f\colon (S,X,\iota_S) \to (T,Y, \iota_T)$ is a  map $f\colon X \to Y$ such that
\begin{enumerate}
\item The image of $S$ under $f$ is contained in $T$; 
\item The restriction $f|_S \colon (S \xrightarrow{\phi} \cP)  \to (T \xrightarrow{\phi'} \cQ)$ is a stratified map; and
\item The preimage of $T$ is contained in $S$, i.e., $f(X\setminus S) \cap T = \emptyset$.
\end{enumerate}
\end{itemize}
\end{definition}

We will at times condense notation and simply denote an object by $S \subset X$. Note that $\mathsf{TStrat}^{\text{embd}}$ is well defined as the identity is clearly a morphism and composition is inherited from composition in spaces. The conditions on $f\colon (S \subset X) \to (T \subset Y)$ are equivalent to asking that the following is a pull-back diagram
\[
\xymatrix{ S \ar[d]_{f|_S} \ar@{^{(}->}[r]^{\iota_S} & X \ar[d]^{f} \\ T \ar@{^{(}->}[r]^{\iota_T} &Y}
\]
This rephrasing makes composition apparent as pasting of pullback squares.

\begin{prop}\label{prop:facemap}
The connected ambient stratification defines a functor $(-)^{\wwedge} \colon \mathsf{TStrat}^{\text{embd}} \to \mathsf{TStrat}$. In particular, given a morphism $f\colon (S \subset X) \to (T \subset Y)$, we have a commutative diagram (in spaces) 
\[
\xymatrix{ X \ar[rr]^{f} \ar[d] && Y \ar[d] \\ \cP^{\wwedge} \ar[rr] && \cQ^{\wwedge}}
\]
\end{prop}
\begin{proof}
Lemma \ref{lem:faceext} shows that we have a well-defined functor at the level of objects.  As the map $f$ is continuous, it takes path components into path components, so we obtain a map at the level of morphisms. Functoriality follows, in a straightforward manner, from composition in $\mathsf{TStrat}$.
\end{proof}

\subsection{Combinatorial Manifolds}

Let us briefly recall some combinatorial and piecewise-linear (PL) topology. A standard reference is \cite{RSPL} or Section 3.9 of \cite{thurston}. (In the latter reference a combinatorial manifold is called a triangulated PL manifold.)

\begin{definition}
A \emph{combinatorial manifold}, $X$, is a PL manifold equipped with a compatible triangulation, i.e., a PL homeomorphism $K  \to X$ for some simplicial complex $K$.
\end{definition}

A PL manifold is a topological manifold equipped with an equivalence class of triangulations, while the combinatorial structure picks out a particular triangulation. It is no surprise that any PL manifold admits a structure of a combinatorial manifold, though the condition is still stronger than being a triangulated topological manifold. A classical theorem of Whitehead is that any smooth manifold admits a unique PL structure, so a triangulated smooth manifold has a unique structure as a combinatorial manifold.

\begin{definition} 
Let $X$ be a combinatorial manifold, and $i>1$. An $i$-tuple of points, $(p_1, \dotsc, p_i)$, in $X$ is in {\em general position} if the points are not contained in a common $(i-2)$-simplex of $X$ for any subdivision of the given triangulation. 
\end{definition}

\begin{definition}\label{defn:generic}
Let $X$ be a combinatorial manifold. A PL  (locally affine) function $f\colon X \to \RR^k$ is is \emph{generic} if for each $1<i\le k$ and any $i$-tuple of vertices $(v_1 , \dotsc , v_i)$ of $X$ in general position, the images $(f(v_1) , \dotsc , f(v_i))$ do not lie on the same $(i-2)$-plane in $\RR^k$. In other words, the images of vertices in general position are also in general position.
\end{definition}

\begin{remark}
Observe that genericity of a PL function $f \colon X \to \RR^k$ implies that $f$ restricted to a non-degenerate $n$-simplex of $X$ is injective. Consequently, the image of a non-degenerate $n$-simplex is an $n$-simplex in $\RR^k$.
\end{remark}

The {\em standard} PL disk is given by the cube $I^n$ and its boundary is the {\em standard} PL sphere. In actuality, any convex polyhedron of dimension $n$ presents the standard PL disk as they are all PL homeomorphic. A similar statement can be made for boundaries of convex polyhedrons and PL spheres.

\begin{definition}
Let $X$ and $Y$ be topological spaces. The \emph{join} of $X$ and $Y$, denoted $X \ast Y$, is given by the following colimit (iterated pushout)

\[
\colim \left(
\begin{tikzcd}
& X \times Y \ar[dl, "\text{pr}_X", swap] \ar[dr, "i_0"] && X \times Y \ar[dl, "i_1", swap] \ar[dr, "\text{pr}_Y"] & \\  X && X \times I \times Y && Y
\end{tikzcd}
\right),
\]

where $i_0$ is the inclusion $(x,y) \mapsto (x,0,y)$ and $i_1$ the inclusion $(x,y) \mapsto (x,1,y)$.
\end{definition}

Morally, the join of spaces is obtained by adjoining all line segments connecting the two spaces (this is literally true for polyhedral spaces embedded in Euclidean space). The join of simplicial complexes is again a simplicial complex, the join of PL manifolds is again PL, and the join of combinatorial manifolds is again a combinatorial manifold.
\begin{definition}
Let $K$ be a simplicial complex.
\begin{itemize}
\item For $\sigma \subseteq K$ a simplex, the \emph{star} of $\sigma$, $\mathop{St} \sigma$, is the set of simplices in $K$ that contain $\sigma$ as a face.
\item For $\sigma \subseteq K$ a simplex, the \emph{link} of $\sigma$, $\Link (\sigma)$, is the closure of the star of $\sigma$ set minus the union of stars of the faces of $\sigma$, i.e., $\Link(\sigma) = \mathop{Cl} \mathop{St} \sigma \setminus \mathop{St} \mathop{Cl} \sigma$.
\item For  $\sigma \subseteq K$ a simplex and $p \in \mathring{\sigma}$, the \emph{link} of $p$, $\Link (p)$, is given by
\[
\Link (p) = \partial \sigma \ast \Link (\sigma).
\]
\end{itemize}
\end{definition}

\begin{lemma}[\cite{thurston}]
A simplicial complex is a PL manifold if and only if the link of each simplex is PL homeomorphic to a PL sphere.
\end{lemma}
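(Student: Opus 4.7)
The plan is to invoke the local characterization of PL manifolds: a simplicial complex $K$ is a PL manifold if and only if $\Link(p)$ is PL homeomorphic to a standard PL sphere for every $p \in K$. This follows because the open star of $p$ is PL homeomorphic to the open cone on $\Link(p)$, and the open cone on a PL sphere is PL homeomorphic to Euclidean space. Given this characterization, the task reduces to translating between links of points and links of simplices using the identity $\Link(p) = \partial \sigma \ast \Link(\sigma)$ for $p \in \mathring{\sigma}$ recorded just before the lemma.

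For the $(\Leftarrow)$ direction, assume that every simplex $\sigma \subseteq K$ has $\Link(\sigma)$ PL homeomorphic to a PL sphere. Given any $p \in K$, let $\sigma$ be its carrier simplex, of dimension $k$. Then $\partial \sigma$ is the boundary of a convex polyhedron, hence PL homeomorphic to the standard PL sphere $S^{k-1}$. The join formula then expresses $\Link(p)$ as a join of two PL spheres, and the classical identity $S^a \ast S^b \cong S^{a+b+1}$ (a special case of the fact, recorded in the excerpt, that joins of combinatorial manifolds are combinatorial) shows that $\Link(p)$ is itself a PL sphere. By the local characterization, $K$ is a PL manifold.

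For the $(\Rightarrow)$ direction, assume $K$ is a PL manifold and proceed by induction on $\dim \sigma$. If $\sigma = v$ is a vertex, then $\partial v = \emptyset$ and $\Link(v) = \Link(p)$ with $p = v$, which is a PL sphere by the local characterization. For $\dim \sigma \geq 1$, choose a vertex $v$ of $\sigma$ and write $\sigma = v \ast \tau$ with $\dim \tau = \dim \sigma - 1$. The standard simplicial identity $\Link_K(v \ast \tau) = \Link_{\Link_K(v)}(\tau)$, combined with the vertex case applied to $v$ (so that $\Link_K(v)$ is a PL sphere, hence a PL manifold), allows the inductive hypothesis applied to $\tau$ inside $\Link_K(v)$ to conclude that $\Link(\sigma)$ is a PL sphere.

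The main obstacles are the two black-box ingredients from classical PL topology that the proof relies upon: the local characterization of PL manifolds via point-links being PL spheres, and the join-of-PL-spheres identity $S^a \ast S^b \cong S^{a+b+1}$. Both are foundational and available in \cite{RSPL}. A subtle bookkeeping point in the forward direction is verifying the simplicial identity $\Link_K(v \ast \tau) = \Link_{\Link_K(v)}(\tau)$ and checking that dimensions align so that the induction terminates correctly; this is routine but must be handled carefully to avoid circularity when the vertex case is re-used to supply the ambient manifold for the inductive step.
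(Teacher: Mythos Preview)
The paper does not actually supply a proof of this lemma; it is stated as a standard fact from PL topology, with the surrounding discussion pointing to \cite{RSPL} and \cite{thurston} as references. Your argument is the standard one found in those sources and is correct, so there is nothing to compare against beyond noting that you have filled in what the paper left as a citation.
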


\begin{lemma}[\cite{RSPL}]
For $n \in \NN$, let $B^n$ be a PL $n$-disk,  $S^n$ a PL sphere, and $M$ any PL manifold. Assuming the factors are disjoint, we have the following PL homeomorphisms:
\[pt \ast M \cong CM, \quad
B^\ell \ast B^k \cong B^{\ell +k+1}, \; \; \text{ and } \; \; 
S^\ell \ast S^k \cong S^{\ell+k+1}.
\]
\end{lemma}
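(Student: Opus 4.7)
The plan is to handle the three PL homeomorphisms sequentially, leveraging the text's observation that every convex polyhedron of dimension $n$ presents the standard PL disk $B^n$ and that boundaries of such polyhedra present $S^{n-1}$. The first identity $pt \ast M \cong CM$ is nearly tautological: substituting $X = pt$ into the defining quotient $X \ast M = X \times M \times I / \sim$, the relation $(a, b, 1) \sim (a', b, 1)$ becomes vacuous (since the $X$-factor is a single point), while the relation $(a, b, 0) \sim (a, b', 0)$ collapses $M \times \{0\}$ to a point; what remains is $M \times I$ with one end identified, i.e., $CM$.

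For $B^\ell \ast B^k \cong B^{\ell+k+1}$, I would pick convex polyhedra representing the two factors and embed them in affinely independent position in $\RR^{\ell+k+1}$, so that their affine spans are disjoint and together span the ambient space. The image $\{(1-t)x + ty : x \in B^\ell, y \in B^k, t \in I\}$ is simultaneously the convex hull of $B^\ell \cup B^k$ and the natural parameterization of $B^\ell \ast B^k$ descending from the quotient $B^\ell \times B^k \times I \to B^\ell \ast B^k$. Affine independence of the factors forces the decomposition $(1-t)x + ty$ to be unique on the interior, which makes the parameterization a continuous bijection and hence a homeomorphism by compactness. The convex hull is itself a convex polyhedron of dimension $\ell+k+1$, so by the text's remark it presents $B^{\ell+k+1}$, and the map is PL with respect to any compatible triangulations.

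For $S^\ell \ast S^k \cong S^{\ell+k+1}$, I plan to work with the product $B^{\ell+1} \times B^{k+1}$, which is a convex polyhedron of dimension $\ell+k+2$ and hence presents $B^{\ell+k+2}$; its boundary presents $S^{\ell+k+1}$ and splits as $(S^\ell \times B^{k+1}) \cup_{S^\ell \times S^k} (B^{\ell+1} \times S^k)$. I would construct an explicit PL homeomorphism $\phi : S^\ell \ast S^k \to \partial(B^{\ell+1} \times B^{k+1})$ by the piecewise formula
\[
\phi([x,y,t]) = \begin{cases} (x,\, 2t \cdot y) & \text{if } t \le 1/2, \\ ((2-2t) \cdot x,\, y) & \text{if } t \ge 1/2, \end{cases}
\]
where scalar products use the radial structures from the disk embeddings $S^\ell = \partial B^{\ell+1}$ and $S^k = \partial B^{k+1}$. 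Well-definedness on the join quotient, agreement of the two cases at $t = 1/2$, continuity, and bijectivity are routine checks.

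The main obstacle is promoting $\phi$ to a genuine PL homeomorphism. The natural simplicial structure on $S^\ell \ast S^k$ arises from triangulating the two factors and forming the polyhedral join, whereas $\partial(B^{\ell+1} \times B^{k+1})$ carries a product cell structure; aligning them requires a common refinement, for instance the prism subdivision of $S^\ell \times S^k \times I$ cut at $t = 1/2$, with respect to which $\phi$ is linear on each simplex. This kind of subdivision manipulation is standard in PL topology, but verifying compatibility at the seam $t = 1/2$ (where the two piecewise branches meet and where the join coordinates transition smoothly into product coordinates) is the technical heart of the lemma.
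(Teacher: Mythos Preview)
The paper does not supply a proof of this lemma; it is stated as standard background in PL topology, with Rourke--Sanderson \cite{RSPL} and Thurston \cite{thurston} cited as general references for the section. There is therefore no ``paper's own proof'' to compare against.

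Your arguments are correct and are essentially the textbook ones. The first identity is indeed definitional once the quotient relations are unpacked, and your reading of which relation collapses which factor matches the paper's convention. For $B^\ell \ast B^k$, embedding convex representatives in affinely independent position and identifying the join with the convex hull is exactly the standard maneuver; the uniqueness of the affine decomposition on the interior is what makes the parameterization injective, and convexity of the hull then invokes the paper's own remark that any convex polyhedron presents the PL disk. For $S^\ell \ast S^k$, your explicit map to $\partial(B^{\ell+1}\times B^{k+1})$ is the classical one, and your identification of the PL-compatibility at $t=1/2$ as the only genuine technical point is accurate; the prism subdivision you propose resolves it. An alternative route---slightly slicker if one wants to avoid the piecewise formula---is to take $S^\ell = \partial\Delta^{\ell+1}$ and $S^k = \partial\Delta^{k+1}$ and use the combinatorial identity for the boundary of a join of simplices together with part (2), but this is a matter of taste rather than substance.
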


\section{Regularity via links}\label{sec:reg}

We now introduce a regularity condition on PL maps $f \colon X \to \RR^k$ by analyzing the restriction of $f$ to links of points in $X$.  This notion of regular/critical is compared to two other notions commonly used in combinatorial topology.

Throughout this section we assume $X$ is an $n$-dimensional combinatorial manifold and $f \colon X \to \RR^k$ is a generic PL function with $n \ge k$.

\subsection{Link Regularity}

Let $n \ge k$ and consider a smooth map $f \colon \RR^n \to \RR^k$. In the smooth setting, a point $p \in \RR^n$ is regular if the differential of $f$ at $p$, $df_p \colon T_p \RR^n \to T_{f(p)} \RR^k$, is onto. The  Submersion Theorem asserts there are local coordinates (at $p$) such that $f$ is simply projection onto the last $k$-factors. Let $B^n$ be a small PL ball centered at $p$ in this coordinate neighborhood, then we have a PL homeomorphism $B^n \cong I^{n-k} \times I^{k}$.  In this combinatorial neighborhood, we have that $\Link (p) \cong S^{n-1} \cong S^{n-k-1} \ast S^{k-1}$, where we identify the first factor with the unit normal sphere at $p$.
(Note that if $n=k$, then the normal bundle is empty and we adopt the convention that $S^{n-k-1} = \emptyset$.). The Submersion Theorem implies that restriction of $f$ to $\Link (p)$ is a homeomorphism when restricted to the unit tangent sphere, $S^{k-1}$, and collapses the normal sphere, $S^{n-k-1}$, to the point $f(p)$.

The goal is to keep the (combinatorial) topological implication of the Submersion Theorem, while removing the assumption of differentiability.\footnote{We thank the referee for some specific suggested language around this motivation.} Therefore, we make the following definition. Note that there is a small subtlety in the case that $n=k$: as $S^{n-k-1}=\emptyset$, there is no normal sphere to collapse onto the image point $f(p)$.

\begin{definition}\label{def:jacobi} Let $f\colon X^n \to \RR^k$ be a PL function with $n \ge k$.
\begin{itemize}
\item[{\rm (a)}] Suppose $n > k$.  A point $p \in X$ is \emph{regular} if there exists a PL homeomorphism $\Link (p) \cong S^{n-k-1} \ast S^{k-1}$ such that
\[
f |_{\Link (p)} \colon S^{n-k-1} \ast S^{k-1} \to f(p) \ast S^{k-1}\]
respects the join decomposition and is a PL homeomorphism of spheres on the second factor.
\item[{\rm (b)}] Suppose $n=k$.  A point $p \in X$ is \emph{regular} if there exists a PL homeomorphism $\Link (p) \cong  S^{k-1}$ such that $f |_{\Link (p)}$ is a homemorphism onto its image.
\item[{\rm (c)}] The \emph{Jacobi set} of $X$ is the collection of critical points, i.e., those points that are not regular. The Jacobi set of the map $f$ is denoted $J_f$.
\end{itemize}
\end{definition}

\begin{example}\label{ex:icos}
Let $X$ be an octohedron in $\RR^3$, and let $f\colon X \to \RR$ be the standard height function for a direction ${u} \in S^{2}$ such that all vertices of $X$ have a unique height with respect to $u$. Consider a point $p$, as in \figref{icos} We see that 
\[
\Link(p) = \Link ([am]) \ast \partial [am] = \{b, d\} \ast \{a,m\}
\] is the join of two copies of $S^0$, and the image $f|_{\Link(p)} = f(p) \ast \{f(a), f(m)\}$ is indeed a PL homeomorphism onto the second $S^0$ factor. Thus, we see that $p$ is regular.  Instead, consider $\Link(m) = \{a,c\} \ast \{d,b\}$. Although this link is also the join of two copies of $S^0$, since $f(m)$ is maximal, the image of $f|_{\Link(m)} \neq f(m) \ast S^0$. Thus, we see that $m$ is critical.
\end{example}

\begin{figure}[h!]
\centering
\includegraphics[scale=.4]{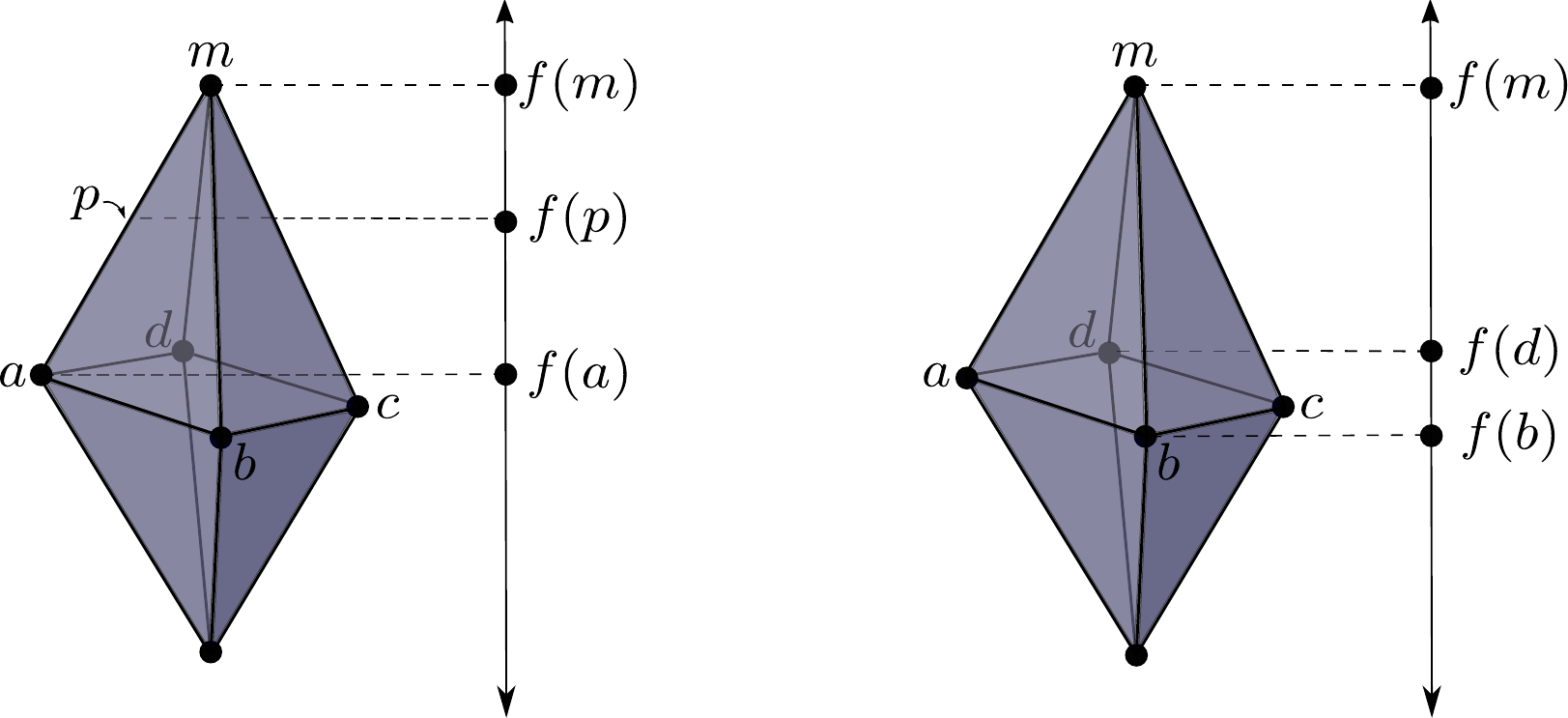}
\begin{caption}{The leftmost images illustrates why $p \in X$, an interior point, is regular and the rightmost image illustrates why $m \in X$ is a critical simplex, as described in \exref{icos}\label{fig:icos}}. 
\end{caption}
\end{figure}

\begin{prop}\label{prop:jacobi}
Let $f\colon X^n \to \RR^k$ be a generic PL function with $n \ge k$. The Jacobi set $J_f$ is an embedded simplicial complex in $X$ of dimension at most $k-1$.
\end{prop}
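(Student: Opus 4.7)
The plan is to argue in three steps. First, regularity is constant on the interior of each simplex: for $\sigma$ a $d$-simplex of $X$ and $p, p' \in \mathring{\sigma}$, the combinatorial structure of $\mathop{St} \sigma$ together with the fact that $f$ is affine on each simplex yields a PL isomorphism between small neighborhoods of $p$ and $p'$ intertwining $f$ up to an affine translation in $\RR^k$. Hence the join decomposition of $\Link(p)$ required by Definition \ref{def:jacobi} exists if and only if the corresponding decomposition of $\Link(p')$ exists. Second, the regular locus is open in $X$: near a regular point $p$, the map $f$ admits a local PL model as the projection $D^{n-k} \times D^k \to D^k$, and this model persists at nearby points.

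The main step is showing that if $\dim \sigma = d \ge k$, then every $p \in \mathring{\sigma}$ is regular. By the genericity hypothesis, $f|_\sigma$ is an affine map of rank $k$, and the same holds for $f|_\tau$ on every coface $\tau \supseteq \sigma$ in $X$. The level set of $f$ through $p$ in $\mathop{St} \sigma$ is therefore a PL $(n-k)$-disk through $p$, and its intersection with $\Link(p)$ is a PL $(n-k-1)$-sphere $\Sigma$ on which $f$ is constantly $f(p)$; this sphere is assembled from $\partial(f|_\sigma^{-1}(f(p)))$ on the $\sigma$-side together with compatible transverse extensions into each coface of $\sigma$. A small $(k-1)$-sphere $T \subset \Link(p)$ lying in $\sigma$ and transverse to the fiber direction of $f|_\sigma$ maps by $f|_\sigma$ to a PL $(k-1)$-sphere around $f(p)$ via a PL homeomorphism. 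Verifying that $\Link(p) \cong \Sigma \ast T \cong S^{n-k-1} \ast S^{k-1}$ with $f|_{\Link(p)}$ respecting this join decomposition establishes the regularity of $p$.

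Combining these steps, the regular locus is open, so $J_f = \bigcup_{\sigma \text{ critical}} \mathring{\sigma}$ is closed, and every face of a critical simplex is itself critical (its interior lies in the closed set $J_f$, hence entirely so by step one). Therefore $J_f = \bigcup_{\sigma \text{ critical}} \sigma$ is the underlying space of a simplicial subcomplex $L \subseteq X$, and by the main step, every critical simplex has dimension at most $k-1$, so $\dim L \le k-1$. The principal obstacle is the middle step: coordinating the level set $\Sigma$ (with contributions from $\partial\sigma$ and from each coface of $\sigma$) with the transverse sphere $T$ so that they assemble into a bona fide join decomposition of $\Link(p)$ with $f|_{\Link(p)}$ respecting the join. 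This is precisely where the EHP genericity assumption pulls the most weight, as it is what guarantees the full-rank behavior of $f|_\tau$ on all simplices $\tau \supseteq \sigma$ of dimension $\ge k$.
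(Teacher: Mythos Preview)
Your plan is sound, but the emphasis is inverted relative to the paper's own proof. The paper dispatches the dimension bound in a single clause---``by genericity of $f$, if $\sigma$ is a critical simplex, then $\dim \sigma \le k-1$''---and devotes its effort to closedness: for a codimension-one face $\tau \subset \sigma$ with $x \in \mathring{\tau}$ and $p \in \mathring{\sigma}$, it writes down an explicit PL homeomorphism $h \colon \Link(x) \cong \Link(p)$ (identity on the $S^{n-k}$ factor, sending $x \mapsto p$) that transports any regularity-witnessing join decomposition from $x$ to $p$, and then inducts on codimension. You, conversely, treat openness lightly (via a local projection model $D^{n-k} \times D^k \to D^k$) and put the real work into the dimension bound, building the join decomposition $\Sigma \ast T$ of $\Link(p)$ by hand when $\dim \sigma \ge k$. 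Both routes are legitimate. The paper's combinatorial link-homeomorphism argument is more economical for closedness and avoids the PL lemma your step two tacitly needs (that the link condition of Definition~\ref{def:jacobi} actually yields an honest local product model). On the other hand, your explicit level-set/transverse-sphere construction is precisely what the paper's ``by genericity'' elides, and you correctly flag assembling $\Sigma \ast T$ into the full $\Link(p)$ as the delicate point.
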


\begin{proof}
As the function $f$ is affine restricted to any given simplex, $\sigma$, if $p \in \mathring{\sigma}$ is critical than for any $q \in \mathring{\sigma}$, $q$ is also critical, i.e., criticality is constant on simplices. Also, by genericity of $f$, if $\sigma$ is a critical simplex, then $\dim \sigma \le k-1$.

To complete the proof, it is sufficient (and necessary) to show that criticality is a closed condition or equivalently that regularity is an open condition. Let $\tau$ be a face of $\sigma$ of one dimension lower than $\sigma$, $x \in \mathring{\tau}$, and $p \in \mathring{\sigma}$. We claim that if $x$ is regular, then $p$ is regular. Indeed, we have
\[
\Link (x) \cong \partial \tau \ast (p \amalg p') \ast S^{n-k-2} \quad \text{ and } \quad \Link (p) \cong \partial \sigma \ast S^{n-k-2},
\]
where $p'$ is an interior point of the other simplex containing $\tau$ as a codimension one face.
There is a PL homeomorphism $h\colon \Link(x) \cong \Link(p)$ that is the identity on the last factor of the join and such that $h(x) = p$.
Now if $x$ is regular via the decomposition $\Link(x) \cong S^{n-k-1} \ast S^{k-1}$, then the resulting decomposition of $\Link (p)$ proves that $p$ is regular. By induction on codimension, if the simplex $\sigma$ is critical, then any point in its closure is critical as well.
\end{proof}

\begin{example}\label{ex:delta3}
Consider a generic projection $\pi \colon \partial \Delta^3 \to \RR^2$ as in the leftmost image of \figref{delta3_regularity}.  As $\dim \partial \Delta^3 = \dim \RR^2$, this example illustrates the exceptional case in the definition of regularity.  Let $p$ be an interior point of the one-simplex $[ab]$ as indicated. The link of $p$ is PL circle given as the join $\{c,d\} \ast \{a,b\}$ which is the union of the one simplices $[ad] \cup [db] \cup [bc] \cup [ca]$. The restriction of $\pi$ to this circle is not a homeomorphism onto its image---it's not even injective--- so $p$, and hence the one simplex $[ab]$ are critical. 

Next, consider the point $r$, which is in the interior of the one-simplex $[ac]$.  This point is indeed regular, as the restriction of $\pi$ to its link---which is the circle $[ab]\cup [bc] \cup [cd] \cup [da]$---is a homeomorphism onto its image.

\end{example}

\begin{figure}[h!] 
\centering
\includegraphics[scale=.5]{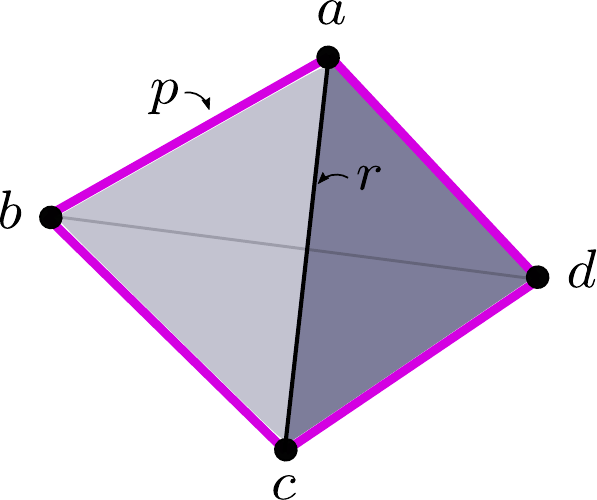}
\begin{caption}{A generic projection $\pi : \partial \Delta^3 \to \RR^2$ along with a regular point, $r$, and a critical point, $p$. The Jacobi set of this projection forms a subcomplex and is highlighted.\label{fig:delta3_regularity}}
\end{caption}
\end{figure}


\subsection{Three notions of regular point}

Let $f\colon X^n \to \RR^k$ be a generic PL function and $n \ge k$. There are (at least) three definitions for when a simplex $\sigma \subseteq X$ of dimension $k-1$ is critical. (From before, this dimension is the critical dimension.) To begin, let us recall the definition from above; in this subsection only, we will call this notion of critical {\em L-critical}. (Also, recall that being regular is an open condition and constant on simplices.)

\begin{definition}[$L$-Criticality] A $k-1$ dimensional simplex $\sigma \subseteq X$ is \emph{L-critical} if it is not $L$-regular, where
\begin{itemize}
\item $k=1$:  The point $p \in \sigma$ is $L$-regular if there exists a PL homeomorphism $\Link (p) \cong S^{n-k-1} \ast S^{k-1}$ such that
\[
f |_{\Link (p)} : S^{n-k-1} \ast S^{k-1} \to f(p) \ast S^{k-1}\]
respects the join decomposition and is a PL homeomorphism of spheres on the second factor; or
\item $k>1$: The simplex $\sigma$ is $L$-regular if there exists a point in the interior of $\sigma$ that is regular.
\end{itemize}
\end{definition}

Under our current hypotheses, a map $f$ has a well-defined derivative as explained in \S3.10 of \cite{thurston}. Using this PL differential one could make the following definition.

\begin{definition}[$D$-Criticality] A $k-1$ dimensional simplex $\sigma \subseteq X$ is \emph{D-critical} if
\begin{itemize}
\item $k=1$:  The differential at the point $\sigma$, $D_{\sigma} f$ is not surjective; or
\item $k>1$: There is a point $p \in \sigma$ for which $D_p f$ is not surjective.
\end{itemize}
\end{definition}

In \cite{edelsbrunner2008reeb}, the authors give a homological definition of criticality. To begin, let $\vec{u} \in S^{k-1}$ be a unit vector and define a function $h_{\vec{u}} \colon X \to \RR$ as $h_{\vec{u}} (p) = \langle f(p) , \vec{u} \rangle$. Let $\sigma \subseteq X$ be a $(k-1)$-simplex and $\vec{u}$ a unit normal to $f(\sigma)$, then $h_{\vec{u}}$ determines {\em upper} and {\em lower} subcomplexes of the link $\Link \sigma$. 

\begin{definition}[$H$-Criticality] The simplex $\sigma$ is \emph{H-critical} if its upper link has a non-vanishing reduced Betti number (with respect to $\ZZ/2$ coefficients). 
\end{definition}

The H-critical condition is actually symmetric in upper versus lower links as one can see by considering the unit vector $-\vec{u}$. 

All three of these notions of criticality are distinct. The H-critical condition has the advantage that it can algorithmically be implemented and checked by machine. In the absence of PL homology spheres and acyclic complexes that are not collapsible, H-criticality would agree with L-criticality; of course, such spaces do exist even in low dimension, e.g., the Poincar\'{e} sphere in dimension three or Bing's house in dimension~two.
 
 
 \begin{prop}
 Let $f: X \to \RR^k$ satisfy \ehp and $\sigma \subseteq X$ a $(k-1)$-simplex.
 \begin{itemize}
 \item[(a)] If $\sigma$ is D-critical, then $\sigma$ is L-critical;
 \item[(b)] If $\sigma$ is D-critical, then $\sigma$ is H-critical.
 \end{itemize}
 \end{prop}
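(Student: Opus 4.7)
The plan is to translate all three conditions into linear-algebraic data on the top-dimensional simplices containing $\sigma$. Since $f$ is affine on each simplex, for every top-dimensional simplex $\tau \supseteq \sigma$ there is a single well-defined linear map $Df|_\tau : T\tau \to \RR^k$, and at any $p \in \mathring{\sigma}$ the image of the PL differential $D_p f$ on the tangent cone is the set-theoretic union $\bigcup_{\tau \supseteq \sigma} \mathrm{image}(Df|_\tau)$. A standard linear-algebra fact -- a finite union of proper linear subspaces of $\RR^k$ cannot cover $\RR^k$ -- then shows that $D_p f$ is surjective if and only if some $Df|_\tau$ has rank $k$. I will use this dictionary for both parts.

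For part (a) I would argue the contrapositive. Suppose $\sigma$ is L-regular and pick a regular $p \in \mathring{\sigma}$ with $\Link(p) \cong S^{n-k-1}\ast S^{k-1}$ and $f|_{\Link(p)}$ respecting the join decomposition and mapping the $S^{k-1}$ factor PL-homeomorphically onto a $(k-1)$-sphere around $f(p)$. Coning this join decomposition, a neighborhood of $p$ in $X$ surjects onto a $k$-disk neighborhood of $f(p)$ in $\RR^k$, so the image of $D_p f$ equals all of $\RR^k$. By the subspace fact, some $Df|_\tau$ has rank $k$. Since this rank is constant along $\tau \supseteq \sigma$, $D_q f$ is surjective for every $q \in \mathring{\sigma}$, and so $\sigma$ is not D-critical.

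For part (b) I would work directly from the hypothesis. Assume $\sigma$ is D-critical, with $p \in \mathring{\sigma}$ satisfying $D_p f$ not surjective. Genericity together with $\dim\sigma = k-1$ forces $f|_\sigma$ to be an affine injection onto a $(k-1)$-simplex $f(\sigma)$ spanning a unique $(k-1)$-dimensional linear direction $V \subset \RR^k$; restricting any $Df|_\tau$ for $\tau \supseteq \sigma$ to $T\sigma$ recovers this injection, so every $Df|_\tau$ has rank at least $k-1$ and contains $V$. The subspace fact, applied to the hypothesis that the union of the images fails to be all of $\RR^k$, forces each $\mathrm{image}(Df|_\tau) = V$. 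Hence $f$ sends the entire open star $\mathop{St}\sigma$ into the affine $(k-1)$-plane $P$ containing $f(\sigma)$. For $\vec{u}$ a unit normal to $f(\sigma)$ (equivalently to $P$), the function $h_{\vec{u}} = \langle f, \vec{u}\rangle$ is therefore constant on $\mathop{St}\sigma$, so the strict upper link of $\sigma$ is empty. Since $\tilde{\beta}_{-1}(\emptyset) = 1$ over $\ZZ/2$, this exhibits a non-vanishing reduced Betti number of the upper link, and so $\sigma$ is H-critical.

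The main obstacle is the very first step: pinning down what "image of $D_p f$" means for a PL function at an interior point of a proper face (it is a union of linear images, not a single linear image) and verifying that the cone-on-join description of L-regularity really does witness a rank-$k$ affine derivative on some top-dimensional simplex through $\sigma$. Once this dictionary is established, the subspace fact drives both implications, and the only remaining subtlety in (b) is the convention $\tilde{H}_{-1}(\emptyset;\ZZ/2) \cong \ZZ/2$, which is precisely what makes an empty upper link register as critical in the Edelsbrunner--Harer framework.
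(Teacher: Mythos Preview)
Your contrapositive argument for (a) is essentially the paper's proof, though the paper stops as soon as $D_pf$ is surjective at the chosen regular $p$ (the PL differential is the same at every interior point of $\sigma$, so no transport via ranks is needed). The paper then simply asserts that (b) ``follows similarly.''

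There is, however, a real gap in your framework that surfaces in (b). The image of the PL differential $D_pf$ on the tangent cone is \emph{not} the union of the full linear images $\bigcup_{\tau\supseteq\sigma}\mathrm{image}(Df|_\tau)$; it is the union of the images of the \emph{cones} $T\sigma\times C(\eta_\tau)$, which can be a proper subset. Concretely, take $k=n=1$, $X=[-1,1]$ with vertex $\sigma=0$, and $f(x)=|x|$: both $Df|_{[0,1]}$ and $Df|_{[-1,0]}$ have rank $1$, yet $D_0f$ has image $[0,\infty)$, so $\sigma$ is D-critical while no $Df|_\tau$ drops rank. Your ``subspace fact'' therefore cannot force each image down to $V$, and the conclusion that $f$ sends $\mathop{St}\sigma$ into the affine $(k-1)$-plane $P$ is false in general.

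The repair is small. What D-criticality actually gives is that, after projecting by a normal $\vec{u}$ to $V$, the composite $\langle D_pf(\cdot),\vec{u}\rangle$ takes only one sign on the tangent cone; equivalently, $f(\mathop{St}\sigma)$ lies on one side of $P$ rather than inside it. Genericity then rules out any link vertex landing exactly on $P$, so for the appropriate sign of $\vec{u}$ the strict upper link of $\sigma$ is empty, and your final step via $\tilde\beta_{-1}(\emptyset)=1$ goes through unchanged.
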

 
 \begin{proof}
 We only prove (a) as (b) follows similarly. That D-criticality implies L-criticality is equivalent to L-regularity implying D-regularity. Let $p \in \mathring{\sigma}$ and assume that $\sigma$ is L-regular with corresponding join presentation $\Link (p) \cong S^{n-k-1} \ast S^{k-1}$.  Since $p$ is a regular point,  $f$ maps the link at $p$ onto the $k$-ball centered at $f(p)$ in a way that preserves the ``unit sphere" in the tangent space (the second factor in the join presentations), hence $Df$ is surjective at $p$.
\end{proof}
 
\begin{example}
The following illustrates a complex that has no D-critical vertices, but has H-critical vertices. Triangulate $S^1$ by four points. Consider the two sphere, $S^2 = pt \ast S^1 \ast pt$. Now define a function on $S^2$ that sends the two cone points to $0 \in \RR$ and assigns  $\pm 1$ to the vertices on the (equatorial) $S^1$ in an alternating way; extend this function linearly to obtain a PL map $f\colon S^2 \to \RR$. Neither cone point is $D$-critical, since the differential is surjective. However, the upper link of a cone point is homeomorphic to $S^0$, so the cone points are $H$-critical.
\end{example}

\begin{example}
Our final example illustrates that the PL differential may miss some L-critical points, even in the case where our combinatorial manifold is in fact smooth. Let $h\colon T^2 \to \RR$ be the upright height function on the torus. Consider a triangulation $\lvert K \rvert \cong T^2$ such that a neighborhood of the lower index one critical point is as in \figref{funkytorusmonkey}. The differential of the height function is clearly surjective at the saddle point. Yet the link can only be decomposed into an ``upper" $S^0$ and a ``lower" $S^0$, hence this point is L-critical.
\end{example}

\begin{figure}[h!] 
\centering
\includegraphics[scale=.75]{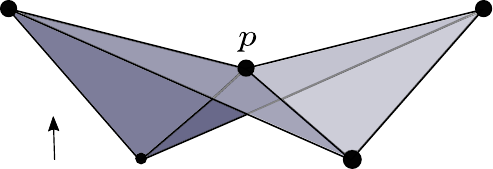}
\begin{caption}{This figure is a piece of a larger triangulated torus. Although the PL differential of the height function at the saddle point $p$ is surjective, the link of $p$ does not satisfy the condition of L-regularity.} \label{fig:funkytorusmonkey}
\end{caption}
\end{figure}

\section{An application to Reeb spaces}\label{sec:reeb}

Reeb graphs have long been an object of interest, and, roughly, are graphic summaries of level sets of real valued functions. The Reeb graph is formed by identifying points that lie in the same connected component of a level set.  First introduced in ~\cite{reeb1946points} for use in Morse theory, they have since been widely used in applications (see \cite{biasotti2008reeb} for a survey) and have been studied for their mathematical properties (e.g., \cite{bauer2014measuring, de2016categorified, cole2004loops}) as well as in terms of efficient computation (e.g., \cite{doraiswamy2009efficient, dey2013efficient}) Reeb spaces generalize the notion of Reeb graphs to the case of an arbitrary map between spaces. 

\begin{definition}\label{def:reeb} Let $f\colon X \to Y$ be a continuous map of spaces.  The \emph{Reeb space} $\WW_f$ is the quotient space defined by the relation: for $a,b \in X$, we have $a \sim b$ whenever $f(a) = f(b)$ and if $a$ and $b$ are in the same connected component of $f^{-1}(f(a))$. 
\end{definition}

Let $q: X \to \WW_f$ be the canonical quotient map. It is often useful to visualize the relationship between these spaces in diagramatic form:
\begin{center}
\begin{tikzcd}[column sep=small]
X \arrow[rr, "f"] \arrow[dr, "q"'] & & Y \\
& \WW_f \arrow[ur, "g"'] & 
\end{tikzcd}
\end{center}

\noindent
where $g$ is the {\em Stein factorization} of $f$, the unique continuous map making the diagram commute. 

\begin{lemma}
Let $\WW_f$ be the Reeb space corresponding to the map $f\colon X \to Y$. If $X$ is normal, then $\WW_f$ is Hausdorff.
\end{lemma}

\begin{proof}
Let $\sim = \{(a,b) \in X \times X \mid a \sim b\}$ and let $\sim^C $ denote the complement of $\sim$.
Since $X$ is normal, it suffices to show that $\sim^C$ is an open subset of $X \times X$. Let $(a,b) \in \sim^C$, meaning $a$ and $b$ do not lie in the same connected component of a level set of $f$. Note that the level set components containing $a$ and $b$, which we denote $A$ and $B$, respectively, are disjoint, and as they are connected components, both are closed in $X$. Since $A$ and $B$ are disjoint closed subsets of a normal space, we can find disjoint open sets containing them, which we denote $A'$ and $B'$, respectively. Then $A' \times B' \subseteq X \times X$ is an open set containing $(a,b)$. We claim that $A' \times B' \cap \sim = \emptyset$. Suppose, towards a contradiction, that $(x,y) \in A' \times B' \cap \sim$. Then $x$ and $y$ would be in the same connected component of a level set of $f$ while simultaneously being in $A'$ and $B'$, a contradiction. Since we have found an open neighborhood around an arbitrary point of $\sim^C$, we know $\sim$ is closed in $X \times X$, and thus, $\WW_f= X/\sim$ is Hausdorff.
\end{proof}

%
%
%
%
%
%

In our setting of PL functions $f \colon X \to \RR^k$, with $X$ a combinatorial manifold, by the above, the associated Reeb space $\WW_f$ will  Hausdorff.  Under the further hypotheses that $\dim X \ge k$ and that $f$ is generic, we know that the Jacobi set $J_f \subseteq X$ is an embedded simplicial complex by Proposition \ref{prop:jacobi}. The Jacobi set is stratified by its face poset, $\Nat(J_f)$, and Lemma \ref{lem:faceext} then determines two stratifications of $X$: the right conical extension which is stratified by $\Nat(J_f)^{\triangleright}$ and a refinement thereof given by $\Nat (J_f)^{\wwedge}$.

\subsection{The stratification of $\RR^k$}

Next, we will show that $f(J_f)$ can determine similar stratifications of $\RR^k$.

Since $J_f$ contains only $(k-1)$ and lower dimensional simplices and since $f$ is generic, we see that $f$ takes simplices to simplices, hence $f(J_f)$ is a collection of $(k-1)$-simplices in $\RR^k$. Even so, there is no guarantee that these $(k-1)$-simplices will not intersect in $\RR^k$, which precludes us from immediately endowing $f(J_f)$ with a simplicial structure. The first step to understanding why we \emph{can} do this is establishing the following lemma.

\begin{lemma}\label{lem:intersections}
Let $f\colon K \to \RR^k$ be a generic PL map from a finite simplicial complex comprised entirely of $(k-1)$-simplices. Then $P = \{p \in \RR^k \mid |f^{-1}(p)| > 1\}$ is a disjoint set of $(k-2)$-dimensional convex regions, and for every $p \in P$, we have $|f^{-1}(p)| = 2$. 
\end{lemma}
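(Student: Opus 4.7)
The plan is to analyze pairwise (and higher) intersections of images of $(k-1)$-simplices in $\RR^k$, using genericity to control the resulting geometry.

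First I would observe that genericity forces $f$ restricted to each $(k-1)$-simplex $\sigma$ to be an affine embedding: the $k$ vertices of $\sigma$ are mapped to points in general position in $\RR^k$, so $f(\sigma)$ is a genuine $(k-1)$-simplex spanning a unique affine hyperplane $H_\sigma$. In particular, every point of $\RR^k$ has at most one preimage in any fixed simplex, and any $p \in P$ must lie in the images of at least two distinct simplices $\sigma_1, \sigma_2 \subseteq K$.

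Next I would fix such a pair $\sigma_1 \neq \sigma_2$ and analyze $f(\sigma_1) \cap f(\sigma_2)$. Genericity rules out $H_{\sigma_1} = H_{\sigma_2}$ (which would force a non-generic affine dependence among the combined vertex images), so $H_{\sigma_1} \cap H_{\sigma_2}$ is an affine $(k-2)$-plane $L_{12}$. Consequently $f(\sigma_1) \cap f(\sigma_2)$ is an intersection of two convex polytopes confined to $L_{12}$, hence a convex subset of a $(k-2)$-plane. Points of this intersection that lie on a face shared by $\sigma_1$ and $\sigma_2$ in $K$ have unique preimages and thus do not contribute to $P$; the remaining contribution is either empty or a $(k-2)$-dimensional convex region. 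Distinct pairs $(\sigma_1,\sigma_2) \neq (\sigma_1',\sigma_2')$ produce intersections in distinct $(k-2)$-planes (again by genericity), so the resulting convex regions are pairwise disjoint on their interiors.

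Finally, I would show no $p \in P$ lies in the images of three or more simplices simultaneously on a $(k-2)$-dimensional set: three distinct hyperplanes in general position in $\RR^k$ intersect in a $(k-3)$-plane, which is strictly lower-dimensional than $(k-2)$, so such coincidences cannot fill out a $(k-2)$-dim component of $P$. Thus at every $p \in P$ exactly two preimages exist. The main obstacle is making these genericity arguments rigorous, since Definition~\ref{defn:generic} only constrains $i$-tuples of at most $k$ vertices; one must argue that any dimensional degeneracy (either $H_{\sigma_1} = H_{\sigma_2}$ for distinct simplices, or a $(k-2)$-dimensional triple intersection) forces an affine relation already detectable at the level of some $k$-vertex subcollection, contradicting general position. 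Careful bookkeeping is also required when $\sigma_1$ and $\sigma_2$ share a face of positive dimension in $K$, to verify that this shared face does not inflate $f(\sigma_1) \cap f(\sigma_2)$ beyond the $(k-2)$-dimensional estimate.
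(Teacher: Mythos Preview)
Your overall strategy matches the paper's: show each $(k-1)$-simplex maps to a genuine $(k-1)$-simplex, analyze pairwise intersections as convex pieces of $(k-2)$-planes, and then control higher-order overlaps. The paper phrases the genericity step as a perturbation argument (any non-transverse or triple intersection could be destroyed by an arbitrarily small vertex perturbation, hence is excluded by genericity), while you work directly with affine hyperplanes; these are two dialects for the same idea.

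There is, however, a real gap in your triple-intersection step. You argue only that three image simplices can overlap on at most a $(k-3)$-dimensional set, and from this conclude ``at every $p \in P$ exactly two preimages exist'' and that the convex regions are ``pairwise disjoint on their interiors.'' Neither conclusion matches the lemma. If a $(k-3)$-dimensional triple overlap were actually present, those points would lie in $P$ with $|f^{-1}(p)| \ge 3$, and the pairwise $(k-2)$-regions would fail to be disjoint (they would share that $(k-3)$-dimensional locus). The lemma asserts the regions are genuinely disjoint and $|f^{-1}(p)| = 2$ \emph{everywhere} on $P$, so you must rule out triple intersections \emph{entirely}, not merely bound their dimension. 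The paper does this in one stroke via the perturbation heuristic: a triple intersection of three compact $(k-1)$-simplices in $\RR^k$ is not stable under perturbation of a single vertex, hence is forbidden by genericity. You should either adopt that argument or strengthen your affine-geometry reasoning to reach the same conclusion.

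Your caveat about Definition~\ref{defn:generic} constraining only $k$-tuples of vertices is well taken; the paper's own proof is informal on exactly this point and implicitly reads ``generic'' in the stronger perturbation-stable sense.
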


\begin{proof}
Since $f$ is generic and PL, the image of any $(k-1)$-simplex of $K$ is again an $(k-1)$ simplex. Since $K$ only consists of $(k-1)$-simplices, $P$ is exactly the intersections of these $(k-1)$-simplices in $\RR^k$ (not including the shared faces of simplices that were also adjacent in $K$). 
Any non-transverse intersection would be removable by an arbitrary perturbation of some vertex of $f(K)$, violating the genericity of $f$. Similarly, no triple of simplex images can intersect, since an arbitrary perturbation of a vertex of one of the simplices involved could change the intersection to three two-way intersections. Thus, intersections of images of $(k-1)$-simplices are pairwise transverse intersections, and therefore are convex with dimension $(k-2)$. Observe that since intersections are only pairwise, we have $|f^{-1}(p)|=2$.
If the collection of such intersections were not disjoint, this would imply an intersection three simplex images, which is not possible by the reasoning above.
\end{proof}

Then, as a corollary of \lemref{intersections}, we have the following.

\begin{prop}\label{prop:Jsimp}
If $f\colon X^n \to \RR^k$ is a generic PL map, and $n \ge k$, then $f(J_f)$ admits a refinement into simplices so that, for $\tau, \sigma \in f(J_f)$ where $\dim(\tau) \leq \dim(\sigma)$, whenever a point $x \in \tau$ and $x \in \sigma$, we have $\tau \subseteq \sigma$ as subsets of $\RR^k$.
\end{prop}

 Since points of $f(J_f)$ with more than one preimage form $(k-2)$-dimensional convex polytopes, we can triangulate these intersection regions so that $f(J_f)$ is a collection of simplices. Although there are many ways to triangulate a convex polytope (and the method of triangulation does not effect our results), a simple explicit triangulation is to place a Steiner point at the barycenter of each $i$-dimensional face to star-triangulate each $i$-dimensional face (whenever the face is not already an $i$-simplex), proceeding from $i=2,3,\ldots,k-2$. 

An example of $f(J_f)$ along with its refinement into simplices is given in \figref{noncomplex}. 

Note that although \propref{Jsimp} does not generally imply $f(J_f)$ is a simplicial complex, it does maintain the property that every simplex of this refinement is either disjoint from, or a subset/superset of every other simplex in the refinement, and thus is a natural candidate for stratification by containment (see \defref{cP}). 

\begin{figure}[h!]
\centering
\includegraphics[scale=.75]{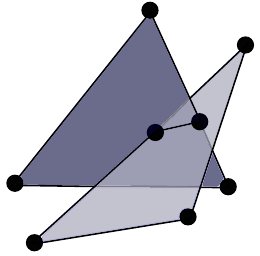}
\begin{caption}{The above is an example of an intersections in $f(J_f)$ for some map $f\colon X \to \RR^3$. By adding vertices and and edges on intersection, we refine $f(J_f)$ into eight vertices, seven edges, and two faces. This can then be stratified by containment, as in \propref{Jsimp}. Note that, although more refinement may be necessary for maps into higher dimensional Euclidean space, no further refinement is necessary for $k \leq 3$ since intersections are one- or zero-dimensional, and are thus already guarenteed to be simplices.\label{fig:noncomplex}}
\end{caption}
\end{figure}

 In what follows, we may abuse notation and use $f(J_f)$ to mean $f(J_f)$ taken with this refinement.
Then, we can define the following.

\begin{definition} \label{def:cP}
Let $n \ge k$, and let $f\colon X^n \to \RR^k$ be a generic PL map.  Further,  suppose that~$f(J_f)$ has been refined into simplices as described in \propref{Jsimp}. We define the poset $\cP_f$, where, for $\tau, \sigma \in f(J_f)$, we have $\tau \leq \sigma$ if and only if $\tau \subseteq \sigma$ as subsets of $\RR^k$. We stratify $f(J_f)$ via the map $\varphi_{f(J_f)}$, where, if $\tau$ is the lowest dimensional simplex containing $x \in f(J_f)$, then $\varphi_{f(J_f)} (x) = \tau$. 
\end{definition}

Given the stratification $f(J_f) \xrightarrow{\varphi_{f(J_f)}} \cP_f$ as above, we can stratify $\RR^k$ via our connected ambient stratification
$\RR^k \xrightarrow{\psi_{f(J_F)}} \cP_f^{\wwedge}$ (see \defref{connectedstrat}) so that each stratum is a connected manifold.

\subsection{The stratification of $\WW_f$}\label{ssec:reebstrat}
Since $\WW_f$ is a quotient of $X$ we would like to use the universal property to stratify $\WW_f$. However, the stratifying map $\psi_{J_f} \colon X \to \Nat (J_f)^{\wwedge}$ is not constant on the fibers of the quotient map $q$. (Nor does the map $f$ ---in general---satisfy the conditions under which $\Nat^{\wwedge}$ is functorial.)

Consider the composition 
\[
 \varphi_{f(J_f)} \circ f  |_{J_f} \colon J_f \to \cP_f .
\]
This stratification is a refinement of $\Nat(J_f)$. For clarity, let us denote a copy of $\cP_f$ by $\widetilde{\cP}_f$ and notate the preceding map by $\Psi_{J_f}$, so we have $\Psi_{J_f} \colon J_f \to \widetilde{\cP}_f$. We can now apply the connected ambient construction to stratify $\WW_f$:
\[
\Psi_{\WW_f} \colon \WW_f \to \widetilde{\cP}_f^{\wwedge}.
\]
As the Stein factorization $g\colon \WW_f \to \RR^k$ is continuous---hence preserving connected components---it induces a natural map of posets $\overline{g} \colon \widetilde{\cP}_f^{\wwedge} \to \cP_f^{\wwedge}$. Moreover, the following is an easy tour through definitions.


\begin{prop}\label{prop:steinstrat}
The Stein factorization $g\colon \WW_f \to \RR^k$ is a map of stratified spaces, i.e., the following commutes in the category of topological spaces
\[
\xymatrix{ \WW_f \ar[r]^g \ar[d]_{\Psi_{\WW_f}} & \RR^k \ar[d]^{\psi_{\RR^k}} \\ \widetilde{\cP}_f^{\wwedge} \ar[r]^{\overline{g}} & \cP_f^{\wwedge}}
\]
\end{prop}

\begin{example}
Consider the height function on the upright torus (equipped with a triangulation), $h \colon T^2 \to \RR$ with  Jacobi set, $J_h$, the four Morse critical points.  \figref{torusheight} depicts the associated Stein factorization and stratifying posets. In this case, one could also conclude that the Stein factorization is a stratified map by applying Proposition \ref{prop:facemap}. 
\end{example}

\begin{figure}[h!] 
\centering
\includegraphics[scale=.40]{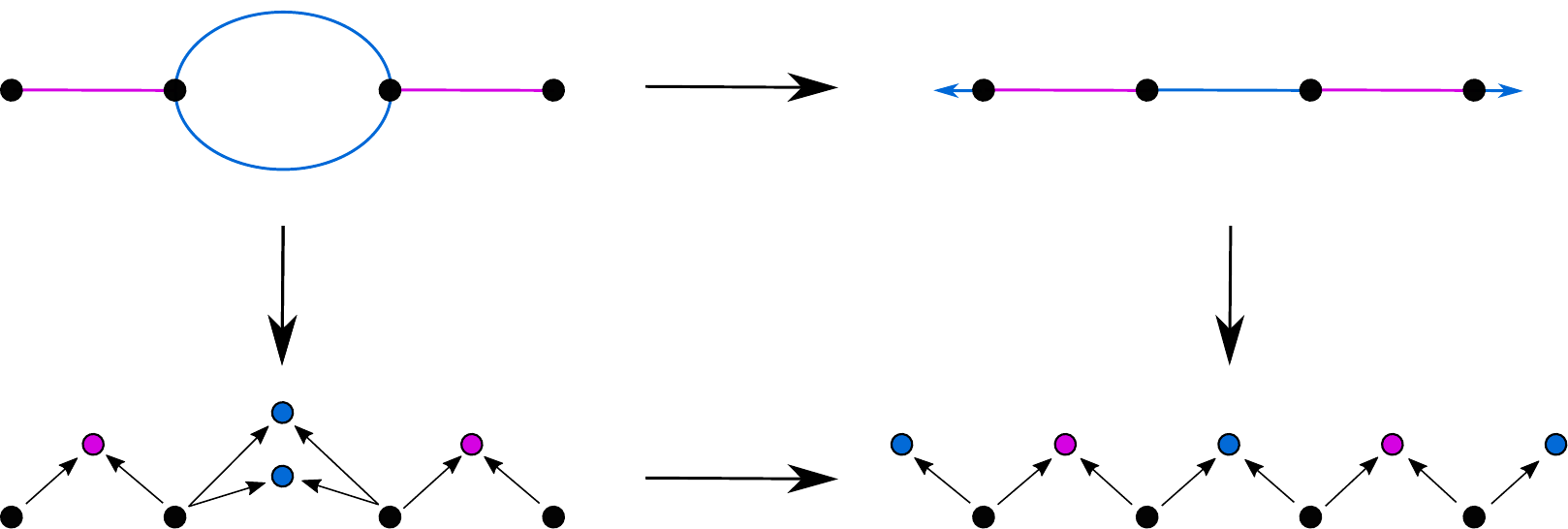}
\begin{caption}{The Stein factorization as a stratified map for the height function on the upright torus. This is an example of the diagram in \propref{steinstrat}.}
\end{caption}
\label{fig:torusheight}
\end{figure}

\begin{prop}
Let $n \ge k$. If the generic PL map $f \colon X^n \to \RR^k$ is injective on the $(k-1)$-skeleton of $X$, then $f \colon X \to \RR^k$ and $q \colon X \to \WW_f$ are stratified maps.
\end{prop}

\begin{proof}
If $f$ is injective on the $(k-1)$-skeleton of $X$, then none of the images of $(k-1)$-simplices of $J_f$ will intersect transversely (i.e., $f(J_f)$ has a simplicial complex structure), so $\cP_f^{\wwedge} = \Nat(f(J_f))^{\wwedge}$. Furthermore, we have $\Nat(f(J_f)) = \Nat(J_f)$ meaning that restricting $f$ and $q$ to elements of $\Nat(J_f)^{\wwedge}$ defines surjections onto $\Nat(f(J_f))^{\wwedge}$. 
\end{proof}

\begin{example}
Let us return to Example \ref{ex:delta3}, so we consider a generic projection $\pi \colon \partial \Delta^3 \to \RR^2$.
The Jacobi set is the simplicial circle formed by  $[ab]\cup [bc] \cup [cd] \cup [da]$ and is indicated by pink bolded lines. The Reeb space is shown on the right, and is homeomorphic to the 2-disc, $\WW_f \cong D^2$: it is stratified with four zero-simplices, four one-simplices (the images of those in the Jacobi set), and the open disc as its top stratum.
\end{example}

\begin{figure}[h!] 
\centering
\includegraphics[scale=.5]{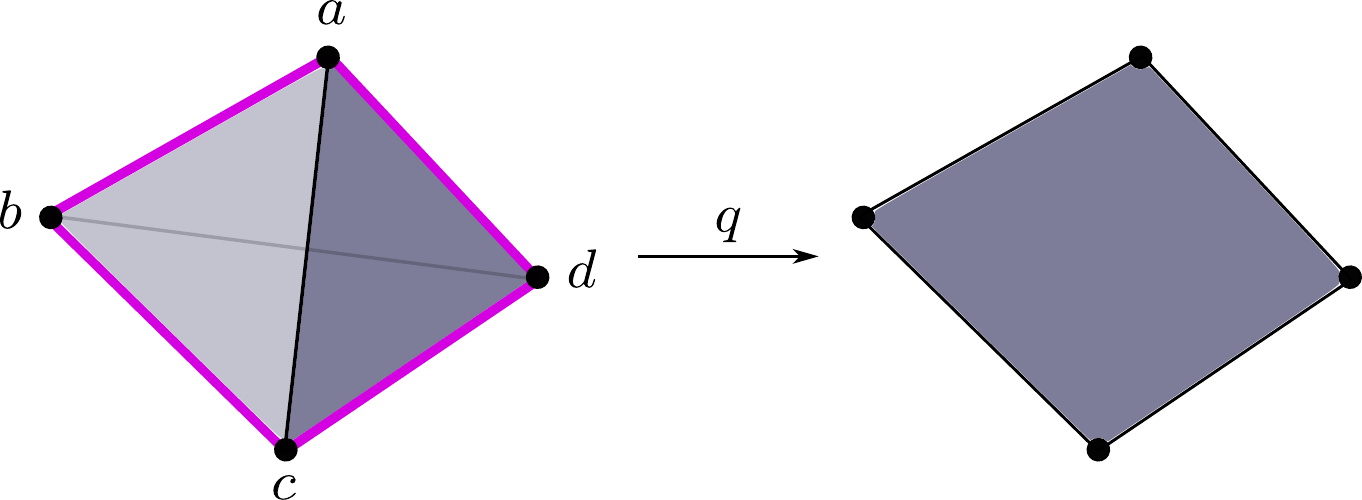}
\begin{caption}{The quotient map and stratification of the Reeb space corresponding to a generic projection $\pi \colon \partial \Delta^3 \to \RR^2$. The Jacobi set, $J_\pi$, is highlighted as a subcomplex of the domain.\label{fig:delta3}}
\end{caption}
\end{figure}

\subsection{Comparing to the EHP stratification}

A main result of \cite{edelsbrunner2008reeb} is defining the so called ``canonical stratification'' of Reeb spaces, which we will refer hereafter as the \emph{EHP stratification}. The EHP stratification is described using an algorithmic construction. First, $X$ is decomposed by coarsening a decomposition of $X$ into prisms created by preimages of the affine hulls of images of $(k-1)$-simplices. This decomposition is inherited by $\WW_f$, and the EHP stratification of $\WW_f$ is the skeletal stratification of a triangulation of the decomposition. As noted in \cite{edelsbrunner2008reeb} a precise version of this algorithm is limited to $k < 5$, since a Boolean subroutine used to determine when two triangulated spaces of dimension $k-1$ are homeomorphic is undecidable for $k \geq 5$. 

Although not explicitly considered in \cite{edelsbrunner2008reeb}, the process by which $\WW_f$ is triangulated could be further used to triangulate both $X$ and $\RR^k$ by star triangulating various convex subspaces. One of the more obvious differences between the EHP stratification of $\WW_f$ and our stratification in \ssref{reebstrat} is that, by construction, each stratum of $\widetilde{\cP}_f^{\wwedge}$ is connected, whereas this is not true in general for a stratum of the EHP stratification. This difference is relatively minor, and we expect under further hypotheses (e.g., $J_f$ is defined equivalently when using homological-criticality vs. criticality and $f(J_f)$ is already a simplicial complex without further refinement) that the stratification of \cite{edelsbrunner2008reeb} and the corresponding skeletal filtration obtained via Proposition \ref{prop:filtration} agree.

\bibliographystyle{plain}
\bibliography{references}

\end{document}